\newtheorem{thm}{Theorem}[subsection]
\newtheorem{prop}[thm]{Proposition}
\newtheorem{lem}[thm]{Lemma}
\newtheorem{sublem}{Sublemma}
\theoremstyle{definition}
\newtheorem{definition}[thm]{Definition}
\newtheorem{example}[thm]{Example}
\newtheorem*{question*}{Question}
\theoremstyle{remark}
\newtheorem{remark}[thm]{Remark}
\newtheorem{assumption}[thm]{Assumption}
\numberwithin{equation}{subsection}
\newcommand{\Q}{\mathbf{Q}}
\newcommand{\Z}{{\mathbf{Z}}}
\newcommand{\F}{\mathbf{F}}
\newcommand{\kfield}{{\mathbf{k}}}
\newcommand{\Gal}{{\rm Gal}}
\newcommand{\Frob}{{\rm Frob}}
\newcommand{\ab}{{\rm ab}}
\newcommand{\GQp}{G_{\mathbf{Q}_p}}
\newcommand{\GL}{{\rm GL}}
\newcommand{\PGL}{{\rm PGL}}
\newcommand{\ad}{{\rm ad\,}}
\newcommand{\Ind}{{\rm Ind}}
\newcommand{\sgn}{{\rm sgn\,}}
\newcommand{\diag}{{\rm diag}}
\newcommand{\res}{{\rm Res}}
\newcommand{\Aut}{{\rm Aut}}
\newcommand{\Hom}{{\rm Hom}}
\newcommand{\End}{{\rm End}}
\newcommand{\loc}{{\rm loc}}
\newcommand{\Spec}{{\rm Spec}}
\newcommand*{\sheafhom}{\mathcal{H}\kern -.5pt om}
\newcommand{\Cl}{{\rm Cl}}
\begin{document}




\title[\resizebox{5.5in}{!}{locally induced Galois representations with exceptional residual images}]{locally induced Galois representations with exceptional residual images}


\author{Chengyang Bao}
\address{Department of Mathematics, The University of Chicago, 
	Chicago, IL 60637}
\email{c.y.bao@math.uchicago.edu}





\begin{abstract}
In this paper, we classify all continuous Galois representations $\rho:\mathrm{Gal}(\overline{\mathbf{Q}}/\mathbf{Q})\to \mathrm{GL}_2(\overline{\mathbf{Q}}_p)$ which are unramified outside $\{p,\infty\}$ and locally induced at $p$, under the assumption that $\overline{\rho}$ is exceptional, that is, has image of order prime to $p$. We prove two results. If $f$ is a level one cuspidal eigenform and one of the $p$-adic Galois representations $\rho_f$ associated to $f$ has exceptional residual image, then $\rho_f$ is not locally induced and $a_p(f)\neq 0$. If $\rho$ is locally induced at $p$ and with exceptional residual image, and furthermore certain subfields of the fixed field of the kernel of $\overline{\rho}$ are assumed to have class numbers prime to $p$, then $\rho$ has finite image up to a twist.
\end{abstract}


\maketitle



\section{Introduction}
Let~$p$ be a rational prime. This paper studies the following general question:

\begin{question*} Determine all continuous irreducible representations
	$$\rho: \Gal(\overline{\Q}/\Q) \rightarrow \GL_2(\overline{\Q}_p)$$
	unramified outside $p$ and $\infty$
	which are locally induced at $p$ (see section \ref{LocallyInduced} for the precise definition.)
\end{question*}

If one assumes that~$\rho$ is additionally both odd and crystalline at $p$,
then $\rho$ is associated up to a twist to a classical modular form $f$ of level one with $a_p(f)=0$.
Conversely, if $a_p(f)=0$ for a classical modular form $f$ of level one, then all the $p$-adic representations $\rho_{f,p}$ attached to $f$
have this property. Hence, this question generalizes Lehmer's
question about the vanishing of the coefficients of Ramanujan's~$\Delta$ function.
In~\cite{CalegariSardari2021vanishing}, this problem was studied from the perspective of Galois deformation
theory, where it was shown that there are only finitely many
such modular representations of level one for a given $p$. 
In this paper, we study the more general question above under the additional hypothesis
that the residual semisimple representation $\overline{\rho}$ is \emph{exceptional}, which means that $p$ is coprime to the order of the image of $\overline{\rho}$. This case is interesting for two reasons.
First, it is related to the first known example (see Example \ref{Example}) of an eigenform $f$ with $a_{p}(f) \equiv 0 \bmod p$ for $p=59$ and the unique cuspidal eigenform of weight $16$. Its residual representation $\overline{\rho}$ has projective image $S_4$, which is exceptional. Second, assuming that $\overline{\rho}$ is locally induced, there \emph{does} exist a characteristic zero deformation $\rho$ of $\overline{\rho}$ which is locally induced but not globally induced; namely, the lifts $\rho$ which have finite image up to twist.
We prove two results.

\begin{thm}\label{Theorem}
	Let $f = \sum_{n \ge 1} a_n q^n$ be a cuspidal eigenform of level one with coefficients
	in $E = \mathbf{Q}(a_1, a_2, ... )$. Suppose that for some embedding $E\hookrightarrow \overline{\Q}_p,$ the $p$-adic Galois representation $\rho_f$ associated to $f$ has exceptional image. Then $a_p(f) \ne 0$ and none of the $p$-adic Galois representations attached to $f$ are locally induced.
\end{thm}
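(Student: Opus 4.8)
The plan is to reduce everything to the assertion that $\rho_f$ is not locally induced at $p$. For a level one form, $a_p(f)=0$ if and only if $\rho_f$ is locally induced at $p$: indeed $\rho_f|_{\GQp}$ is crystalline with Hodge--Tate weights $\{0,k-1\}$ and $a_p(f)$ equals the trace of crystalline Frobenius $\phi$ on $D_{\mathrm{cris}}(\rho_f|_{\GQp})$; if this trace vanishes then $\phi^{2}$ is the scalar $-p^{k-1}$, so $\rho_f|_{\GQp}$ is irreducible (a reducible crystalline representation with these weights has Frobenius eigenvalues of valuations $0$ and $k-1$, not both $(k-1)/2$) and $\rho_f$ restricted to the unramified quadratic extension splits as $\psi\oplus\psi^{\sigma}$, i.e.\ $\rho_f|_{\GQp}\cong\Ind_{G_{\Q_{p^{2}}}}^{\GQp}\psi$ for a crystalline character $\psi$; conversely an induction from a ramified quadratic extension of $\Q_p$ is ramified, hence not crystalline, and an induction from $\Q_{p^{2}}$ has $\tr\phi=0$. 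Since $a_p(f)$ lies in the number field $E$, its non-vanishing does not depend on the chosen prime above $p$ or on the embedding, so it suffices to show $\rho_f$ is not locally induced at $p$.

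Suppose for contradiction that it is; then $\rho_f|_{\GQp}\cong\Ind_{G_{\Q_{p^{2}}}}^{\GQp}\psi$ with $\psi$ crystalline, and modulo $p$, writing $\overline{\rho}=\overline{\rho}_f$ and replacing $k$ by the weight $k'$ of $\overline{\rho}$ with $2\le k'\le p+1$, one gets $\overline{\rho}|_{\GQp}\cong\Ind_{G_{\Q_{p^{2}}}}^{\GQp}\overline{\psi}$ with $(\overline\psi/\overline\psi^{\sigma})|_{I_p}=\omega_2^{(1-p)(k'-1)}$ of some order $d\ge 2$ (the bound $d=(p+1)/\gcd(p+1,k'-1)\ge 2$ holds since $k'-1\le p$). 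Hence $\overline{\rho}|_{\GQp}$ is irreducible and the image of the decomposition group in $\PGL_2(\overline{\F}_p)$ is the dihedral group of order $2d$. As $\overline{\rho}$ is exceptional, its projective image $\overline{G}$ lies on Dickson's list and contains this dihedral group, so $\overline{G}$ is non-cyclic, and if it is not dihedral then $\overline{G}\in\{A_4,S_4,A_5\}$ with $d$ bounded by $2$, $4$, $5$ respectively.

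Next I would exclude the two ``expected'' types of global behaviour. If $\rho_f$ is globally induced, say $\rho_f\cong\Ind_{\Gal(\overline{\Q}/K)}^{\Gal(\overline{\Q}/\Q)}\chi$ for a quadratic field $K$, then $K$ is unramified outside $\{p,\infty\}$, hence $K=\Q(\sqrt{p^{*}})$ with $p^{*}=(-1)^{(p-1)/2}p$ for $p$ odd (and one of $\Q(i),\Q(\sqrt{\pm2})$ for $p=2$); in every case $p$ ramifies in $K$, so $\rho_f|_{\GQp}$ is induced from a ramified quadratic extension of $\Q_p$, whose Weil--Deligne representation is ramified (on inertia it surjects onto the regular representation of the order-$2$ inertia quotient), contradicting crystallinity at $p$. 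Thus $\rho_f$ is not globally induced. And $\rho_f$ cannot have finite image up to twist, since that would force all its Hodge--Tate weights equal, whereas they are $0\ne k-1$.

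The remaining, and main, obstacle is to show that such an $\overline{\rho}$ admits no characteristic-zero lift $\rho_f$ that is unramified outside $p$, crystalline at $p$ with $a_p=0$ of the prescribed Hodge--Tate type and determinant, and neither globally induced nor finite up to twist. I would attack this by a deformation-theoretic and Galois-cohomological argument: the prime-to-$p$ order of $\overline{\rho}(\Gal(\overline\Q/\Q))$ makes $\ad^0\overline{\rho}$ a semisimple $\F_p[\Gal(\overline\Q/\Q)]$-module with $H^{0}(\Gal(\Q_{\{p,\infty\}}/\Q),\ad^0\overline{\rho})=0$, and the local crystalline condition is \emph{rigid} — crystalline representations of $\GQp$ with $a_p=0$, Hodge--Tate weights $\{0,k-1\}$ and fixed determinant are unique up to isomorphism (as the reduction above makes clear), so this local deformation condition has trivial tangent space. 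Greenberg--Wiles then bounds the global tangent space in terms of the dual Selmer group $H^{1}(\Gal(\Q_{\{p,\infty\}}/\Q),\ad^0\overline{\rho}(1))$, and one must show that, once the cyclotomic twisting direction is accounted for, any further classes would force $\rho_f$ to be dihedral — already excluded. This is the technical heart of the argument, and it is precisely here that some control on the arithmetic of the small, tamely-ramified subfields of the fixed field of $\ker\overline{\rho}$ is needed; the crystallinity available in the level one setting, which pins $\rho_f|_{\GQp}$ down almost completely, is what I expect to let one carry this through unconditionally. Granting it, $\rho_f$ — which has infinite image, its Hodge--Tate weights being distinct — cannot exist, a contradiction; hence $\rho_f$ is not locally induced at $p$, so $a_p(f)\ne 0$, and none of the $p$-adic representations attached to $f$ is locally induced.
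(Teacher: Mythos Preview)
Your argument has a genuine gap at what you yourself call ``the technical heart.'' You set up a deformation problem with a rigid local condition at $p$ and then write ``Granting it'' for the step that would bound the global Selmer group. That step is not a formality: controlling the dual Selmer group $H^1_{\Sigma^*}(G_\Q,\ad^0\overline{\rho}(1))$ in the exceptional case comes down exactly to the $p$-parts of class groups of subfields of the fixed field of ${\rm P}\overline{\rho}$, and this is precisely the \emph{nicely exceptional} hypothesis the paper imposes for its second theorem. Your hope that crystallinity alone removes the class-number input is not substantiated, and the paper gives no indication that it can be; indeed, the paper keeps the two theorems separate for this reason. So as written, your proposal proves Theorem~\ref{Theorem} only under an unstated class-number assumption.

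The paper's proof is entirely different and avoids deformation theory. You already have the key local ingredient: if $a_p(f)=0$ then $\overline{\rho}_f|_{\GQp}$ has projective image $D_{2d}$ with $d=(p+1)/\gcd(k-1,p+1)$. Since a level one eigenform has even weight and $p$ is odd, $k-1$ is odd, so $v_2(d)=v_2(p+1)$. Your bound $d\le 5$ (after ruling out dihedral and $A_4$ global image, which you should also do explicitly) then forces $v_2(p+1)\le 2$, i.e.\ $p\not\equiv 7\pmod 8$. The closing input you are missing is Hatada's congruence: for any level one cuspidal eigenform and odd $p$, one has $a_p(f)\equiv 1+p\pmod 8$, so $a_p(f)=0$ forces $p\equiv 7\pmod 8$. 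These two constraints on $p\pmod 8$ are incompatible, and that contradiction is the whole proof for odd $p$; the case $p=2$ is handled by Calegari--Sardari. No Selmer groups, no class numbers.
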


We say the residual representation $\overline{\rho}$ is {\it nicely exceptional} if it is exceptional and furthermore the class number of the fixed field 
of the projective representation ${\rm P}\overline{\rho}$ is prime to $p$. 
(We actually give a less restrictive definition of nicely exceptional representations in Definition \ref{NicelyExceptional}.)  For $\rho$ that are unramified outside $\{p,\infty\}$, which includes the case where $\rho$ is even, we have the following result.

\begin{thm}\label{ThmFinite}
	Suppose that $\overline{\rho}$ is unramified outside $\{p,\infty\}$, locally induced at $p$ and nicely exceptional (in the sense of Definition \ref{NicelyExceptional}); then every lift $\rho$ of $\overline{\rho}$ which is unramified outside $\{p,\infty\}$ and locally induced at $p$ has finite image, up to a twist. 
\end{thm}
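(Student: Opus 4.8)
The plan is to use Galois deformation theory in the style of Calegari--Sardari. First I would set up the deformation problem: because $\overline{\rho}$ is exceptional, its image has order prime to $p$, so $H^0(\Gal(\overline{\Q}/\Q), \ad\overline{\rho})$ and the relevant cohomology computations simplify, and in particular deformations are unobstructed away from the local condition at $p$. The key point is to compare two deformation rings: the ring $R$ parametrizing lifts of $\overline{\rho}$ unramified outside $\{p,\infty\}$ that are locally induced at $p$, and the subfunctor (or quotient ring) $R^{\mathrm{fin}}$ parametrizing those lifts which, up to a twist, have finite image — equivalently, are induced from a finite-order character of $\Gal(\overline{\Q}/K')$ for the appropriate quadratic field $K'$ unramified outside $p$. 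I want to show the natural surjection $R \twoheadrightarrow R^{\mathrm{fin}}$ is an isomorphism after inverting $p$ (or even integrally), which would force every such $\rho$ to factor through $R^{\mathrm{fin}}$ and hence have finite image up to twist.

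Next I would compute tangent spaces. The tangent space of $R$ is a Selmer group $H^1_{\mathcal{L}}(\Gal(\overline{\Q}/\Q), \ad\overline{\rho})$ cut out by the "locally induced at $p$" condition $\mathcal{L}_p$ at $p$ and the unramified-outside-$\{p,\infty\}$ condition elsewhere. Since $\overline{\rho}$ restricted to the decomposition group at $p$ is induced from a character of $\Gal(\overline{\Q}_p/K)$ with $K/\Q_p$ the unramified quadratic extension, $\ad\overline{\rho}|_{\GQp}$ decomposes according to the action of the nontrivial element of $\Gal(K/\Q_p)$, and the locally-induced condition picks out the part of $H^1(\GQp, \ad\overline{\rho})$ coming from deformations of the inducing character; this is essentially $H^1(\Gal(\overline{\Q}_p/K), \overline{\Q}_p)$-type data, whose dimension I can read off from local Tate duality and the local Euler characteristic formula. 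Comparing with the global Euler characteristic formula, and using that the image of $\overline{\rho}$ has order prime to $p$ so that the adjoint has no invariants or coinvariants globally beyond the scalars (and the sign condition at $\infty$), I expect the global Selmer group $H^1_{\mathcal{L}}$ to have dimension equal to $1$ — accounting precisely for the one-parameter family of twists of a fixed finite-image representation, i.e. $\dim R^{\mathrm{fin}} = 1$ as well. The nicely-exceptional hypothesis (class number of the fixed field of $\mathrm{P}\overline{\rho}$, or the relevant subfields, prime to $p$) is exactly what kills the extra classes: it ensures the relevant ray class group / unit cohomology contribution vanishes mod $p$, so that deforming the inducing character along the quadratic extension $K'$ cannot produce anything beyond finite-order characters times the cyclotomic twist.

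The structural heart of the argument is then: $R$ is generated by its tangent space (since the obstruction group $H^2$ vanishes, again thanks to exceptionality and the local condition being "smooth"), so $\dim R \le 1 + (\text{obstructions}) = 1$; on the other hand $R^{\mathrm{fin}}$, which is a quotient of $R$, already has Krull dimension $1$ (the twisting line); hence $R = R^{\mathrm{fin}}$ and every $\overline{\Q}_p$-point of $R$ lies on $R^{\mathrm{fin}}$, giving the conclusion. I would carry this out in the order: (1) local analysis at $p$ of the induced condition and its tangent space; (2) global Selmer/Euler characteristic computation using exceptionality; (3) the class-number input showing the Selmer group is no larger than the twisting line; (4) unobstructedness ($H^2 = 0$) and the ring-theoretic conclusion $R = R^{\mathrm{fin}}$.

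The main obstacle I anticipate is step (3): precisely controlling the local-to-global map and showing that the only global classes satisfying the locally-induced condition at $p$ are the ones "explained" by twisting a finite-image lift. This requires relating the Selmer group to the $p$-part of a class group (or a ray class group with restricted ramification) of a specific number field attached to $\overline{\rho}$ — which is exactly why the nicely-exceptional hypothesis on class numbers is imposed — and being careful that the field in question is genuinely the fixed field of $\mathrm{P}\overline{\rho}$ (or the slightly larger collection allowed by Definition \ref{NicelyExceptional}) rather than something larger whose class number one cannot control. A secondary subtlety is handling the prime $2$ and the archimedean place correctly when $\rho$ is allowed to be even, since then the usual oddness input to Euler-characteristic bookkeeping is unavailable and one must argue that the even case does not enlarge the Selmer group.
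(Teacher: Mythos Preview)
Your overall strategy---deformation theory with a Selmer-group tangent space, with the class-number hypothesis controlling that Selmer group---is the paper's strategy. But there are two genuine problems in your execution, and the paper's route sidesteps both.

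First, your description of $R^{\mathrm{fin}}$ is wrong: finite-image lifts of $\overline{\rho}$ are \emph{not} globally induced from any quadratic field $K'$. The paper shows (Lemma~\ref{S4A5} and Remark~\ref{NotGloballyInduced}) that ${\rm P}\overline{\rho}(G_\Q)\cong S_4$ or $A_5$, never dihedral, so by Lemma~\ref{InducedProjectiveDihedral} neither $\overline{\rho}$ nor any lift is globally induced. The finite-image lift exists simply because $|\overline{\rho}(G_\Q)|$ is prime to $p$, so the image lifts set-theoretically to $\GL_2(W(\kfield))$; there is no inducing character to deform.

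Second, you rely on $H^2=0$ for smoothness and then a dimension comparison to force $R=R^{\mathrm{fin}}$. The paper never computes $H^2$. Instead it \emph{fixes the determinant} to the Teichm\"uller lift of $\det\overline{\rho}$, which kills your ``twisting line'' and reduces the tangent space to $H^1_\Sigma(G_\Q,\ad^0\overline{\rho})$. This is shown to be \emph{zero}, so $R$ is a quotient of $W(\kfield)$; the finite-image lift furnishes a $W(\kfield)$-point, hence $R=W(\kfield)$ on the nose. Any $\rho$ can be twisted to have determinant $\chi$ (since $p>2$ allows square roots in $1+\mathfrak{m}$), and uniqueness of the $W(\kfield)$-point finishes the argument. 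No obstruction theory, no comparison of two one-dimensional rings.

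Finally, the step you flagged as the main obstacle---how the class-number hypothesis kills extra Selmer classes---is where the paper does something you have not anticipated. One cannot work directly over $\Q$ because $\ad^0\overline{\rho}$ does not split as $\eta\oplus W$ globally. The paper restricts to $G_F$, where $F\subset L$ is the subfield fixed by the decomposition group $D\cong D_{2d}$; over $G_F$ one does have $\ad^0\overline{\rho}\cong\eta\oplus W$. Two lemmas then handle the summands separately: (i) no nonzero class over $\Q$ can restrict purely into the $\eta$-piece, because $\eta$ is not a $G_\Q$-subrepresentation of $\ad^0\overline{\rho}$ (a direct check using the possible shapes of $D$ and $I$ in Lemma~\ref{GDI}); and (ii) the kernel of $H^1_{\mathcal{N}}(G_F,W)\to\prod_{w\mid p}H^1(G_w,W)$ embeds into $\Hom_D(\Cl(L)/p,\,W)$, which by Frobenius reciprocity equals $\Hom_{\kfield[G]}(\Cl(L)/p\otimes\kfield,\,\Ind_D^G W)$, and the irreducible constituents of $\Ind_D^G W$ are then matched, one by one, to the class numbers of the specific subfields named in Definition~\ref{NicelyExceptional}. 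This explicit decomposition, not a general Euler-characteristic bound, is where ``nicely exceptional'' actually does its work.
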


\begin{example}\label{Example}
	Let $\overline{\rho}$ be the unique  mod-$59$ semisimple residual representation attached to the weight 16 and level one cuspidal eigenform $f$. Then $\overline{\rho}$ is exceptional with ${\rm P}\overline{\rho}(\Gal(\overline{\Q}/\Q))\cong S_4$, which was first recognized in \cite[\S4]{swinnerton1973l} and first proved in \cite{haberland1983perioden} (see also \cite{KIMING2005236}). Since the order of $\overline{\rho}(\Gal(\overline{\Q}/\Q))$ is prime to $p$ by Proposition \ref{ExceptionalResidualImage}, we obtain a lift $\rho$ of $\overline{\rho}$ to $\GL_2(\overline{\Q}_p)$ by simply lifting its finite image. One can verify with magma \cite{magma} that the class number of the $S_4$ extension is one and thus $\overline{\rho}$ is nicely exceptional. It then follows from Theorem \ref{ThmFinite} that any other lift of $\overline{\rho}$ into $\GL_2(\overline{\Q}_p)$ that is unramified outside $\{p,\infty\}$ and locally induced at $p$ is a twist of $\rho$.  
\end{example}

\begin{remark}
	In some literature, $\overline{\rho}$ is said to be exceptional if $\overline{\rho}$ is reducible or $p$ is coprime to the order of the image of $\overline{\rho}.$ We prove in Lemma \ref{AbsIrreducible} that $\overline{\rho}$ is absolutely irreducible if it is locally induced. Therefore, we adopt the more restrictive definition of exceptional representations. 
\end{remark}

\subsection*{Sketch of the proof}  To prove Theorem \ref{Theorem}, by local arguments using Breuil's theorem \cite[Prop 3.1.2]{breuil_2003II}, we show that $p$ is not congruent to $7\pmod 8$ if one of the reductions of ${\rho}_f$ is exceptional and $a_p(f)=0$. But Hatada's main theorem in \cite{Hatada1979} rules out this case.  For Theorem 1.0.3, following Calegari and Sadari's strategy, we study the deformation ring parametrizing deformations of $\overline{\rho}$ with properties in the theorem. We show that the tangent space of the deformation ring is trivial. Thus the deformation ring is the Witt ring of the residual field. Hence, it contains at most one $\overline{\Q}_p$-point and there is one $\overline{\Q}_p$-point that corresponds to the deformation of finite image up to twist. 

The paper is organized as follows. In \S2 we collect some results from representation theory and discuss the consequences of $\rho$ being locally induced and $\overline{\rho}$ being exceptional. Along the way we prove Theorem \ref{Theorem}. In \S3 we associate to $\overline{\rho}$ a deformation functor and study its tangent space under the assumptions that $\overline{\rho}$ is locally induced and exceptional. Theorem \ref{ThmFinite} is proved in \S3.

\subsection*{Notation and Conventions} 
If $F$ is a field, we denote by $G_F$ its absolute Galois group $\Gal(\overline{F}/F)$. If $T$ is a finite extension of $\Q_p,$ we let $\mathcal{O}_T$ be the ring of integers of $T$ and $\kfield_T$ be the residue field of $T.$ We let $\Frob$ denote the Frobenius map $x\mapsto x^p$ for $x\in \kfield_T.$ We fix the $p$-adic valuation $v_p:\overline{\Q}_p\to \Z \cup\{\infty\}$ normalized so that $v_p(p)=1.$ If $E$ is a number field and $w$ is a place of $E,$ we denote by $E_w$ the completion of $E$ at $w$. We let $G_w$ be the absolute Galois group of $E_w$, which is isomorphic to the decomposition group of a place of $\overline{\Q}$ over $w$. Denote by $I_w$ the inertia subgroup of $G_w$. Let $\Cl(E)$ be the class group of $E.$ If $\kfield$ is a finite field of characteristic $p,$ we denote by $W(\kfield)$ the ring of Witt vectors of $\kfield.$

If $\rho:G\to \GL_n(F)$ is a representation, then we define ${\rm P}\rho$ to be the projective representation given by $${\rm P}\rho:G\to \GL_n(F)\to {\rm P}\GL_n(F).$$ All representations are assumed to be continuous. When there is no topology mentioned, we take the discrete topology. 

For an odd prime $p,$ we let $p^*=(-1)^{(p-1)/2}p.$ All restriction maps are denoted by $\res$ unless otherwise noted in this paper.

\section{Galois Representations}

\subsection{Preliminaries}{\label{Prelim}}
In this subsection, we collect some general results from representation theory that will be used in the sequel. 

Let $\kfield$ be a field of characteristic not equal to 2 and let $\rho : G\to \GL_2(\kfield)$ be a representation of a group $G$ over $\kfield.$ Suppose that $G$ has an index two subgroup $H$. Let $\alpha$ and $\alpha'$ be two characters of $H$ into $\kfield^\times$ that are permuted by the action of $\eta:G\to G/H\cong\{\pm1\}\subseteq \kfield^\times$. Calegari and Sadari proved in \cite{CalegariSardari2021vanishing} a few lemmas on two-dimensional induced representations which we record as follows. 

\begin{lem}\label{InducedReducibleAndSemisimple}
	Suppose that $\rho\cong\Ind^G_H\alpha.$ Then $\rho$ is reducible if and only if $\alpha$ extends to a character of $G.$ In this case, $\rho\cong \alpha\oplus(\alpha\otimes\eta).$ Hence, the induced representation $\rho$ is always semisimple.
\end{lem}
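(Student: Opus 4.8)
The plan is to reduce the lemma to a few standard facts about induction along the index-two subgroup $H\le G$ (so in particular a finite-index subgroup, and one whose index is invertible in $\kfield$): Frobenius reciprocity — valid since $[G:H]<\infty$, so that $\Ind^G_H$ is simultaneously a left and a right adjoint of $\res$ — the projection formula $\Ind^G_H(\res_H\chi\otimes\beta)\cong\chi\otimes\Ind^G_H\beta$, and the decomposition of the permutation representation $\Ind^G_H\mathbf 1\cong\kfield[G/H]$ with $G/H\cong\{\pm1\}$.

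For the reducibility criterion I would argue the two directions directly. If $\alpha$ extends to a character $\chi$ of $G$, i.e. $\alpha=\res_H\chi$, then the projection formula gives $\rho\cong\Ind^G_H(\res_H\chi)\cong\chi\otimes\Ind^G_H\mathbf 1$; since $2$ is invertible in $\kfield$, the representation $\Ind^G_H\mathbf 1\cong\kfield[G/H]$ splits as $\mathbf 1\oplus\eta$ via the idempotents $\tfrac12(1\pm g_0)$ (with $g_0\notin H$), so $\rho\cong\chi\oplus(\chi\otimes\eta)$ is reducible, and — writing $\alpha$ also for the extension $\chi$ — has exactly the claimed form $\alpha\oplus(\alpha\otimes\eta)$. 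Conversely, if $\rho$ is reducible then, being two-dimensional, it contains a one-dimensional subrepresentation, i.e. a character $\chi$ of $G$ with $\Hom_G(\chi,\Ind^G_H\alpha)\neq0$; by Frobenius reciprocity this space equals $\Hom_H(\res_H\chi,\alpha)$, and a nonzero homomorphism between one-dimensional representations is an isomorphism, so $\res_H\chi=\alpha$ and $\alpha$ extends to $G$.

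Finally, semisimplicity: by the two implications just proved, a reducible $\rho$ is a direct sum of two characters, hence semisimple, while an irreducible $\rho$ is semisimple trivially, so $\rho$ is always semisimple. (Alternatively one can first note $\res_H\rho\cong\alpha\oplus\alpha'$ — which is where the hypothesis that $\alpha$ and $\alpha'$ are $\eta$-conjugate enters — and then promote this to semisimplicity of $\rho$ by the usual averaging over $G/H$, again using $\characteristic\kfield\neq2$.) There is no serious obstacle here; the only points needing care are that a reducible two-dimensional representation genuinely yields a one-dimensional \emph{sub}representation, so that Frobenius reciprocity can be applied in the direction above, and that the hypothesis $\characteristic\kfield\neq2$ is invoked precisely where it is needed — for the splitting $\kfield[G/H]\cong\mathbf1\oplus\eta$ (equivalently, for the averaging giving semisimplicity).
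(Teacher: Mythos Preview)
Your argument is correct and complete: the projection formula gives the forward implication and the explicit decomposition $\alpha\oplus(\alpha\otimes\eta)$, Frobenius reciprocity gives the converse, and semisimplicity follows immediately from the dichotomy. The paper itself does not prove this lemma but simply cites \cite[Lemma~1.3.1]{CalegariSardari2021vanishing}, so you have in fact supplied a self-contained proof where the paper defers to an external reference; there is nothing further to compare.
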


\begin{proof}
	See \cite[Lemma 1.3.1]{CalegariSardari2021vanishing}. 
\end{proof}

\begin{lem}\label{InducedProjectiveDihedral}
	Suppose that the image of $\rho$ is finite. Then the following are equivalent.
	\begin{enumerate}
		\item The representation $\rho$ is irreducible and induced from some character $\alpha$ of $H$.
		\item The finite group ${\rm P}\rho(G)$ is a non-cyclic dihedral subgroup of order $2d$ with ${\rm P}\rho(H)$ cyclic of order $d$. 
	\end{enumerate} 
\end{lem}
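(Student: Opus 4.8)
The plan is to prove each implication by writing $\rho$ in a basis adapted to the index-two subgroup $H$ and then reading off the projective image, using Lemma \ref{InducedReducibleAndSemisimple} as the bridge between irreducibility and the characters involved. For (1) $\Rightarrow$ (2), fix $g_0 \in G \setminus H$ and use the standard model of $\Ind^G_H \alpha$: in the basis $\{v,\rho(g_0)v\}$ the restriction $\rho|_H$ is the diagonal representation $\alpha \oplus \alpha'$ and every element of the coset $g_0 H$ acts by an invertible anti-diagonal matrix. Passing to ${\rm P}\GL_2(\kfield)$, the diagonal matrices $\diag(\alpha(h),\alpha'(h))$ project onto the image of $\alpha/\alpha' : H \to \kfield^\times$, a finite and therefore cyclic group, say of order $d$; by Lemma \ref{InducedReducibleAndSemisimple} irreducibility of $\rho$ amounts to $\alpha$ not extending to $G$, i.e.\ $\alpha \neq \alpha'$, so $d \ge 2$. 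An invertible anti-diagonal matrix has order $2$ in ${\rm P}\GL_2(\kfield)$, normalizes the image of the diagonal torus, and acts on it by inversion; moreover it does not itself lie in ${\rm P}\rho(H)$, since it is not a scalar multiple of a diagonal matrix. Hence ${\rm P}\rho(G)$ is generated by the cyclic subgroup ${\rm P}\rho(H)$ of order $d$ and an inverting involution outside it, so it is dihedral of order $2d$, and non-cyclic since $d \ge 2$.

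For (2) $\Rightarrow$ (1), note first that ${\rm P}\rho(H)$ being cyclic makes $\rho(H)$ abelian — it is an extension of a cyclic group by central (scalar) matrices — and, being finite, $\rho(H)$ is simultaneously diagonalizable; so in a suitable basis $\rho|_H = \alpha \oplus \alpha'$ for characters $\alpha,\alpha'$ of $H$, with $\alpha/\alpha'$ of order $d = |{\rm P}\rho(H)| \ge 2$, in particular $\alpha \neq \alpha'$. For $g_0 \in G \setminus H$, conjugation by $g_0$ permutes the characters occurring in $\rho|_H$, so $\rho(g_0)$ either stabilizes both eigenlines or interchanges them; it cannot stabilize both, since then $\rho(g_0)$ would be diagonal and ${\rm P}\rho(G)$ abelian, contradicting that ${\rm P}\rho(G)$ is non-cyclic dihedral of order $2d > d$. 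So $\rho(g_0)$ is anti-diagonal and swaps the two eigenlines; consequently $\alpha^{g_0} = \alpha'$, and the data $\rho|_H = \alpha \oplus \alpha'$ together with $\alpha^{g_0} = \alpha'$ identifies $\rho$ with $\Ind^G_H \alpha$ (equivalently, Frobenius reciprocity yields a non-zero map $\Ind^G_H \alpha \to \rho$, which is an isomorphism because $\Ind^G_H \alpha$ is two-dimensional and, by Lemma \ref{InducedReducibleAndSemisimple} together with $\alpha \neq \alpha'$, irreducible). The same lemma then shows $\rho$ itself is irreducible.

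I expect the real content to be in the backward direction, specifically the step that upgrades projective information to linear information: diagonalizing $\rho|_H$ and then excluding the possibility that an element outside $H$ acts diagonally. The diagonalization uses that the finite abelian group $\rho(H)$ is semisimple, which is automatic in characteristic zero and, more generally, whenever $|\rho(G)|$ is prime to the characteristic of $\kfield$ — the settings in which the lemma is applied. The forward direction is essentially bookkeeping with $2 \times 2$ matrices; the only points requiring care are tracking which coset of $H$ produces diagonal versus anti-diagonal matrices and handling the degenerate case $d = 2$, where ``dihedral of order $4$'' means the Klein four-group.
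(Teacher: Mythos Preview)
The paper does not prove this lemma itself; it simply cites \cite[Lemma 1.3.2]{CalegariSardari2021vanishing}. Your direct argument is the standard one and is correct. One small refinement: in the backward direction, simultaneously diagonalizing $\rho|_H$ over $\kfield$ requires not only that $\rho(H)$ act semisimply (which you flag) but also that the eigenvalues of each $\rho(h)$ already lie in $\kfield$; without this there is no $\kfield$-valued character $\alpha$ to induce from, and (2)$\Rightarrow$(1) can genuinely fail (e.g.\ the standard two-dimensional representation of $S_3$ over $\Q$ satisfies (2) with $d=3$ but is not induced from a $\Q$-valued character of $\Z/3\Z$). This is harmless in the paper's applications, since Assumption~\ref{Assumption} explicitly enlarges $\kfield$ to contain all eigenvalues of $\overline{\rho}$, but it is worth stating as a hypothesis alongside the semisimplicity condition.
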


\begin{proof}
	See \cite[Lemma 1.3.2]{CalegariSardari2021vanishing}. 
\end{proof}

\begin{lem}\label{AdjointRepDecompWhenInduced}
	 Suppose that $\rho\cong \Ind^G_H\alpha.$ Then the trace-zero adjoint representation $\ad^0\rho$ splits as $\eta\oplus \Ind^G_H ({\alpha'}^{-1}\alpha)$.
\end{lem}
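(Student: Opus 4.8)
The plan is to compute the adjoint representation directly as $\ad\rho\cong\rho\otimes\rho^\vee$ (conjugation action on $\End(\kfield^2)$, valid since $\characteristic\kfield\neq2$ so that $\ad\rho=\kfield\cdot\id\oplus\ad^0\rho$ as $G$-modules) and then to peel off the trivial summand. Since $\alpha$ is a character, the dual of $\rho=\Ind^G_H\alpha$ is $\rho^\vee\cong\Ind^G_H\alpha^{-1}$ (induction from a finite-index subgroup commutes with taking duals). The projection formula (push--pull) then gives
$$\ad\rho\;=\;\Ind^G_H\alpha\otimes\Ind^G_H\alpha^{-1}\;\cong\;\Ind^G_H\!\big(\alpha\otimes\res^G_H\Ind^G_H\alpha^{-1}\big).$$

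Next I would apply Mackey's formula to the normal index-two subgroup $H\trianglelefteq G$: choosing $\sigma\in G\setminus H$, one has $\res^G_H\Ind^G_H\alpha^{-1}\cong\alpha^{-1}\oplus{}^\sigma(\alpha^{-1})=\alpha^{-1}\oplus\alpha'^{-1}$, where $\alpha'={}^\sigma\alpha$ is exactly the $\eta$-conjugate character of the statement. Tensoring with $\alpha$ and using that induction is additive on direct sums yields $\ad\rho\cong\Ind^G_H(\mathbf 1_H)\oplus\Ind^G_H(\alpha'^{-1}\alpha)$. Because $\characteristic\kfield\neq2$, the permutation representation $\Ind^G_H\mathbf 1_H$ of $G/H\cong\{\pm1\}$ splits as $\mathbf 1_G\oplus\eta$, so $\ad\rho\cong\mathbf 1\oplus\eta\oplus\Ind^G_H(\alpha'^{-1}\alpha)$; comparing with $\ad\rho=\kfield\cdot\id\oplus\ad^0\rho$ gives the claim.

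The one point that needs care is the final step: identifying $\ad^0\rho$ itself, rather than merely "$\ad\rho$ with one copy of $\mathbf 1$ removed", with $\eta\oplus\Ind^G_H(\alpha'^{-1}\alpha)$. I would handle this either (i) by a brief multiplicity count using that all modules in sight are semisimple --- $\Ind^G_H(\alpha'^{-1}\alpha)$ by Lemma \ref{InducedReducibleAndSemisimple}, hence $\ad\rho$ and its direct summand $\ad^0\rho$ as well --- together with Krull--Schmidt and the observation $\eta\not\cong\mathbf 1$; or (ii) by the fully explicit route: writing $\rho(h)=\diag(\alpha(h),\alpha'(h))$ for $h\in H$ and $\rho(\sigma)=\left(\begin{smallmatrix}0&\ast\\1&0\end{smallmatrix}\right)$, one checks on the standard basis $\{E,F,\diag(1,-1)\}$ of $\mathfrak{sl}_2$ that $H$ acts on $\langle E,F\rangle$ by $\alpha\alpha'^{-1}\oplus\alpha'\alpha^{-1}$ with $\sigma$ swapping the two lines (so $\langle E,F\rangle\cong\Ind^G_H(\alpha'^{-1}\alpha)$), while $\langle\diag(1,-1)\rangle$ is a $G$-stable line on which $\sigma$ acts by $-1$ (so it realizes $\eta$). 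I do not expect a genuine obstacle here; the only things to watch are the hypothesis $\characteristic\kfield\neq2$ (needed both to split off $\mathbf 1$ and to make $\ad^0$ three-dimensional) and keeping track of the conjugate character $\alpha'$ versus $\alpha$.
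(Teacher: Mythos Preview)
Your proposal is correct and essentially matches the paper's proof. The paper simply restricts $\ad\rho$ to $H$, writes $\Hom(\alpha\oplus\alpha',\alpha\oplus\alpha')\cong\kfield\oplus\kfield\oplus\alpha^{-1}\alpha'\oplus\alpha'^{-1}\alpha$, and then observes that the $G$-action swaps the pieces to get $\ad\rho\cong\kfield\oplus\eta\oplus\Ind^G_H(\alpha'^{-1}\alpha)$; your projection-formula/Mackey route is just a formal packaging of that same restriction-and-swap computation, and your explicit route (ii) on the $\mathfrak{sl}_2$ basis is exactly the paper's argument written out in coordinates.
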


\begin{proof}
	The adjoint representation $\ad\rho$ viewed as an $H$-representation is \begin{align*}
		\Hom(\rho|_H,\rho|_H)&=\Hom(\alpha\oplus \alpha',\alpha\oplus  \alpha')
		\cong\kfield \oplus \kfield \oplus \alpha^{-1}\alpha' \oplus \alpha'^{-1}\alpha.
	\end{align*} Since the action of $G$ swaps $\alpha$ and $\alpha'$, we have $\ad\rho\cong\kfield\oplus \eta\oplus \Ind^G_H({\alpha'}^{-1}\alpha)$ as a $G$-representation. Hence $$\ad^0\rho=\ad\rho/\kfield\cong\eta\oplus \Ind^G_H({\alpha'}^{-1}\alpha)$$ as desired.
\end{proof}

We now discuss the semisimple residual representation associated to a $p$-adic representation $\rho : G\to \GL_n(\overline{\Q}_p)$ of a profinite group $G$. It is known that the image $\rho(G)$ is contained in $\GL_n(T)$ for some finite extension $T/\Q_p$ and it preserves a lattice $\Lambda\subseteq T^n$ (see for example the proof of \cite[Cor 5]{Dickinson2001modularity} for an explanation). Hence $\rho$ is conjugate to some integral representation $$\rho_{\Lambda}:G\to \Aut(\Lambda)\cong \GL_n(\mathcal{O}_{T}).$$ Reduce $\rho_{\Lambda}$ modulo the maximal ideal of $\mathcal{O}_{T}$ and take its semisimplification. Then we obtain a semisimple residual representation $$\overline{\rho} :G\to \GL_n(\kfield_T).$$  It follows from Brauer--Nesbitt theorem that $\overline{\rho}$ does not depend on the choice of $\Lambda.$

Now suppose $n=2$. We have the following proposition from group theory \cite[Prop. 16]{Serre1971} that provides an exhaustive list of the possible image of an exceptional $\overline{\rho}$.
\begin{prop}\label{ExceptionalResidualImage}
	Suppose that the residual Galois representation $\overline{\rho}: G_\Q\to \GL_2(\overline{\F}_p)$ is exceptional. Then \begin{enumerate}
		\item [(i)] either the projective image of $\overline{\rho}$ is dihedral; or
		\item [(ii)] or the projective image of $\overline{\rho}$ is isomorphic to $A_4, S_4$ or $A_5.$
	\end{enumerate}  
\end{prop}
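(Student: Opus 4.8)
The strategy is to translate the assertion into Dickson's classification of the finite subgroups of $\PGL_2$ in characteristic $p$ --- the group-theoretic input recorded in \cite[Prop.~16]{Serre1971} --- and then to discard the possibilities whose order is divisible by $p$. First I would note that $\overline{\rho}$ is continuous from a profinite group to a discrete group, so $G := \overline{\rho}(G_\Q)$ is finite, say $G \subseteq \GL_2(\F_q)$ with $q$ a power of $p$; the hypothesis that $\overline{\rho}$ is exceptional means precisely that $p \nmid |G|$. Passing to the projective image $H := {\rm P}\overline{\rho}(G_\Q)$, which is a quotient of $G$, we still have $p \nmid |H|$, and the problem becomes: classify the finite subgroups of $\PGL_2(\overline{\F}_p)$ of order prime to $p$.

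The main step is Dickson's theorem, which says that every finite subgroup of $\PGL_2(\overline{\F}_p)$ is, up to conjugacy, either
\begin{enumerate}
\item[(a)] cyclic, dihedral, or contained in a Borel subgroup (hence an extension of an elementary abelian $p$-group by a cyclic group of order prime to $p$);
\item[(b)] isomorphic to $A_4$, $S_4$, or $A_5$; or
\item[(c)] ${\rm PSL}_2(\F_{p^s})$ or $\PGL_2(\F_{p^s})$ for some $s \ge 1$.
\end{enumerate}
Since $|{\rm PSL}_2(\F_{p^s})|$ and $|\PGL_2(\F_{p^s})|$ are divisible by $p$, and a subgroup of a Borel with nontrivial $p$-part likewise has order divisible by $p$, the condition $p \nmid |H|$ eliminates (c) and leaves, within (a), only the cyclic and dihedral groups. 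Thus $H$ is cyclic, dihedral, or one of $A_4, S_4, A_5$.

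It remains to rule out the cyclic case. If $H$ were cyclic, choose $g \in G$ whose image generates $H$; then every element of $G$ is $g^k$ times a scalar matrix, so $G$ is abelian. As each element of $G$ has order prime to $p$, it is diagonalizable over $\overline{\F}_p$, and a commuting family of diagonalizable matrices is simultaneously diagonalizable; hence $\overline{\rho}$ would be a direct sum of two characters, contradicting the absolute irreducibility of $\overline{\rho}$ (Lemma~\ref{AbsIrreducible}, available because the representations under consideration are locally induced). Therefore $H$ is dihedral or isomorphic to $A_4, S_4, A_5$, which is the claim.

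I do not expect any genuine obstacle: the substance is entirely contained in Dickson's classification, which I invoke as a citation, while the order computations are routine and the cyclic case is a short linear-algebra argument. The one point requiring a little care is that excluding the cyclic projective image uses absolute irreducibility of $\overline{\rho}$; this is legitimate in the present setting by Lemma~\ref{AbsIrreducible}, since $\overline{\rho}$ arises as the residual representation of a locally induced $\rho$.
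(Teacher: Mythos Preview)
The paper does not give a proof of its own here; it simply records the statement as \cite[Prop.~16]{Serre1971}, which is exactly the Dickson classification you invoke, so your approach coincides with the paper's. One minor remark: your exclusion of the cyclic projective image appeals to Lemma~\ref{AbsIrreducible}, a forward reference that imports the locally-induced hypothesis not present in the proposition's own hypotheses; this is harmless in context (the proposition is only ever applied to locally induced $\overline{\rho}$, and there is no circularity since Lemma~\ref{AbsIrreducible} does not use this proposition), but strictly speaking the statement as written, with ``exceptional'' meaning only $p\nmid|\overline{\rho}(G_\Q)|$, would also admit cyclic projective image absent an irreducibility assumption.
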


We define two characters $\epsilon_2$ and $\epsilon_2'$ that are later used to describe the Galois representation $\rho_f$ associated to an eigenform $f$ with $a_p(f)=0$. Let $K$ be the unramified quadratic extension of $\Q_p.$ By local class field theory, there is a unique homomorphism $K^\times\to G_K^\ab\to K^{\times}$ that sends $z\in \mathcal{O}_K^\times$ to $z$ and $p$ to $1.$ The two embeddings of $K$ into $\overline{\Q}_p$ then give rise to two characters $\epsilon_2$ and $\epsilon_2'$ from $G_K$ to $\overline{\Q}^\times$ that are permuted by the action of $\Gal(K/\Q_p)$. Denote by $\overline{\epsilon}_2$ and $\overline{\epsilon}_2'$ their reductions modulo $p$ respectively. We have $$\overline{\epsilon}_2'=\Frob(\overline{\epsilon}_2)=\overline{\epsilon}_2^p.$$

{\label{SEQModule}} Let $A$ be an abelian group sitting inside a short exact sequence of groups $$1\to A\to G\to H\to 1.$$ Then $A$ can be made into an $H$-module by defining  $h\cdot a :=\widetilde{h}a\widetilde{h}^{-1}$ for all $h\in H$ and $a\in A.$ 



\subsection{Locally induced Galois representations}{\label{LocallyInduced} In the remainder of this paper we assume $p$ to be an odd prime. 
	
We first explain the relation between the vanishing of the Fourier coefficient $a_p$ and locally induced Galois representations. 
Let $f=\sum_{n=1}^{\infty}a_nq^n$ be an eigenform in $S_k(\Gamma_1(1),\overline{\Q}_p)$ and denote by $\rho_f: G_\Q\to \GL_2(\overline{\Q}_p)$ the Galois representation attached to $f$. Then $\rho_f|_{\GQp}$ is crystalline of Hodge-Tate weights $(0,k-1)$ and the characteristic polynomial of the crystalline Frobenius is $X^2-a_p(f)X+p^{k-1}$ by \cite{Scholl1990motives}. 
When $v_p(a_p(f))>0$, the local representation $\rho_f|_{\GQp}$ is irreducible, and $a_p(f)$ completely determines it. In particular, 
when $a_p(f)=0$, we have the following nice description of $\rho_f|_{\GQp}$ by \cite[Prop 3.1.2]{breuil_2003II} (see also \cite[Thm 2.1.1]{CalegariSardari2021vanishing}).
\begin{thm}[Breuil]\label{Breuil} Suppose that $a_p(f)=0.$ Then
	$${\rho_f}|_{\GQp} \cong\Ind^{\GQp}_{G_K} \alpha , {\rm \quad  for\ }\alpha=\epsilon_2^{k-1} \otimes\psi|_{G_K}$$ where $\psi:\GQp\to K^\times$ is some unramified character  such that $\psi^2$ is the unramified character of $\GQp$ that sends the Frobenius element to $-1.$
\end{thm}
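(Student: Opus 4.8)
The plan is to derive this from the classification of two-dimensional crystalline representations of $\GQp$ in the supersingular case $a_p=0$, which is precisely \cite[Prop.\ 3.1.2]{breuil_2003II} (and is recorded again in \cite[Thm.\ 2.1.1]{CalegariSardari2021vanishing}); in outline, the proof invokes that classification and then does a short bookkeeping computation to match normalizations, and I sketch a largely self-contained version. The relevant input on $V:=\rho_f|_{\GQp}$ has already been recalled: $V$ is crystalline with Hodge--Tate weights $\{0,k-1\}$; $\det V=\chi_{\mathrm{cyc}}^{k-1}$ since $f$ has level one and hence trivial nebentypus; by \cite{Scholl1990motives} the crystalline Frobenius $\varphi$ on $D_{\mathrm{cris}}(V)$ has characteristic polynomial $X^2-a_p(f)X+p^{k-1}=X^2+p^{k-1}$, so its roots are $\pm\mu$ with $\mu^2=-p^{k-1}$; and $V$ is irreducible, as $v_p(a_p(f))=\infty>0$.

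The first step is that $V$ is induced from the unramified quadratic extension $K/\Q_p$. After base change to $K$, the linearized crystalline Frobenius of $V|_{G_K}$ is $\varphi^2$, which equals the scalar $-p^{k-1}$ because both roots of $\varphi$ square to $-p^{k-1}$. Hence every line of $D_{\mathrm{cris},K}(V|_{G_K})$ is $\varphi^2$-stable; in particular the Hodge filtration line $L$ is, and it is weakly admissible since $t_H(L)=k-1=v_p(-p^{k-1})=t_N(L)$. So $V|_{G_K}$ contains a crystalline character $\alpha$, and Frobenius reciprocity then gives a nonzero map $\Ind^{\GQp}_{G_K}\alpha\to V$, which is an isomorphism because both sides are two-dimensional and $V$ is irreducible. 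Restricting back, $V|_{G_K}\cong\alpha\oplus\alpha^\sigma$ with $\sigma$ a generator of $\Gal(K/\Q_p)$, and $\alpha\neq\alpha^\sigma$.

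The second step identifies $\alpha$. A crystalline character of $G_K$ is an unramified twist of a product of powers of the two Lubin--Tate (niveau-two) characters $\epsilon_2,\epsilon_2'$; matching its pair of Hodge--Tate weights against the requirement that $\Ind^{\GQp}_{G_K}\alpha$ have weights $\{0,k-1\}$ forces $\alpha|_{I_K}=\epsilon_2^{k-1}|_{I_K}$, after possibly interchanging $\epsilon_2$ and $\epsilon_2'$ (which leaves $\Ind^{\GQp}_{G_K}\alpha$ unchanged). Writing $\lambda:=\alpha\cdot\epsilon_2^{-(k-1)}$, an unramified character of $G_K$, one has $\lambda^\sigma=\lambda$ since conjugation by $\sigma$ acts trivially on the unramified quotient $G_K/I_K$; hence $\lambda$ extends to an unramified character $\psi$ of $\GQp$ with $\psi|_{G_K}=\lambda$, any square root of $\lambda(\Frob_K)$ being an admissible value of $\psi(\Frob)$. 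A short trace computation shows $\lambda(\Frob_K)=-p^{k-1}$, so $\psi(\Frob)\in\{\mu,-\mu\}$, consistent with the crystalline Frobenius eigenvalues. This gives $\alpha=\epsilon_2^{k-1}\otimes\psi|_{G_K}$, and the quadratic relation on $\psi$ in the statement then falls out of comparing $\det\Ind^{\GQp}_{G_K}\alpha$ with $\det V=\chi_{\mathrm{cyc}}^{k-1}$, using the projection formula, the formula writing $\det\Ind^{\GQp}_{G_K}\beta$ as $(\beta\circ\mathrm{Ver})$ times the quadratic character of $\GQp$ cutting out $K$ (here $\mathrm{Ver}$ is the transfer), and the identity $\epsilon_2\circ\mathrm{Ver}=\chi_{\mathrm{cyc}}$.

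The only genuinely non-elementary ingredient is the $p$-adic Hodge theory used here — the equivalence between crystalline representations and weakly admissible filtered $\varphi$-modules, and the classification of crystalline characters of $G_K$ (for $k-1<p$, Fontaine--Laffaille theory is enough) — which I would quote rather than reprove; this ``obstacle'' is entirely off the shelf. The one point that needs care is the final comparison of determinants, where one must fix compatible normalizations for $\epsilon_2$, for the transfer map, and for $\det\rho_f$ so as to land exactly on $\psi^2=1$. Accordingly, in the paper it suffices to cite \cite[Prop.\ 3.1.2]{breuil_2003II} (see also \cite[Thm.\ 2.1.1]{CalegariSardari2021vanishing}).
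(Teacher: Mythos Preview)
The paper does not prove this theorem at all; it is stated as a result of Breuil with a citation to \cite[Prop.\ 3.1.2]{breuil_2003II} (and \cite[Thm.\ 2.1.1]{CalegariSardari2021vanishing}), exactly as you yourself conclude in your final sentence. Your proposal is therefore aligned with the paper's treatment, and the sketch you supply of Breuil's argument (reducibility of $V|_{G_K}$ via the filtered $\varphi$-module once $\varphi^2$ becomes scalar, identification of the crystalline character via Hodge--Tate weights, and pinning down $\psi$ by comparing determinants) is a reasonable bonus that goes beyond what the paper records.
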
 

We now work with the following setup.
	Let $\rho:G_\Q\to \GL_2(\overline{\Q}_p)$ be unramified outside $\{p,\infty\}.$ Suppose that locally at $p$, we have 
	$$\rho|_{\GQp}\cong\Ind^{\GQp}_{G_K}\alpha\quad{\rm and}\quad \rho|_{G_K}\cong \alpha\oplus\alpha'$$ for characters $\alpha,\alpha':G_K\to \overline{\Q}_p^{\times}.$ 
	Denote by $\overline{\alpha}$ and $\overline{\alpha}'$ the reductions of $\alpha$ and $\alpha'$ respectively. By Lemma \ref{InducedReducibleAndSemisimple}, $\overline{\rho}|_{\GQp}$ is semisimple and thus there are identifications $$\overline{\rho}|_{\GQp}\cong\Ind^{\GQp}_{G_K}\overline{\alpha}\quad {\rm and}\quad \overline{\rho}|_{G_K}\cong\overline{\alpha}\oplus\overline{\alpha}'.$$
	When we specialize $\rho$ to $\rho_f$ associated to an eigenform $f\in S_k(\Gamma_1(1),\overline{\Q}_p)$ with $a_p(f)=0$, we let $\alpha=\epsilon_2^{k-1} \otimes\psi|_{G_K}$ and $\alpha'={\epsilon}_2'^{k-1} \otimes\psi|_{G_K}$. Denote by  $\overline{\psi}$ the reduction of $\psi$ and recall from \S \ref{Prelim} that $\overline{\epsilon}_2'=\overline{\epsilon}_2^p.$ We then have $$\overline{\rho}_f|_{\GQp}\cong \Ind^{\GQp}_{G_K}\left(\overline{\epsilon}_2^{k-1}\otimes\overline{\psi}|_{G_K} \right)\quad{\rm and}\quad \overline{\rho}_f|_{G_K}\cong \left(\overline{\epsilon}_2^{k-1}\oplus\overline{\epsilon}_2^{p(k-1)}\right)\otimes\overline{\psi}|_{G_K}.$$ 

We deduce some properties of the induced local representation $\rho|_{\GQp}$.

\begin{lem}\label{AbsIrreducible}
	The local representation  $\overline{\rho}|_{\GQp}$ is absolutely irreducible. 
\end{lem}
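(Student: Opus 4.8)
The plan is to argue by contradiction. Suppose $\overline{\rho}|_{\GQp}$ is not (absolutely) irreducible. Since $\overline{\rho}|_{\GQp}\cong\Ind^{\GQp}_{G_K}\overline{\alpha}$, Lemma \ref{InducedReducibleAndSemisimple} (applied with $G=\GQp$, $H=G_K$) says that $\overline{\alpha}$ extends to a character $\overline{\beta}:\GQp\to\overline{\F}_p^\times$ and that
$$\overline{\rho}|_{\GQp}\cong \overline{\beta}\oplus(\overline{\beta}\otimes\eta),$$
where $\eta$ is the nontrivial character of $\Gal(K/\Q_p)$. The goal is then to show this configuration is impossible for a global $\overline{\rho}$ that is unramified outside $\{p,\infty\}$.

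The key observation is that $K/\Q_p$ is the \emph{unramified} quadratic extension, so $\eta$ is unramified. Restricting to the inertia subgroup $I_p$ at $p$ therefore gives $\overline{\rho}|_{I_p}\cong\overline{\beta}|_{I_p}\oplus\overline{\beta}|_{I_p}$, i.e. $\overline{\rho}|_{I_p}$ is scalar, and hence ${\rm P}\overline{\rho}$ is trivial on $I_p$. Since $\overline{\rho}$ — and therefore ${\rm P}\overline{\rho}$ — is unramified outside $\{p,\infty\}$, it follows that the number field $L$ cut out by ${\rm P}\overline{\rho}$ is unramified at every finite prime. By Minkowski's theorem $\Q$ has no nontrivial extension unramified at all finite places, so $L=\Q$ and ${\rm P}\overline{\rho}$ is trivial; thus $\overline{\rho}(G_\Q)$ consists of scalar matrices, and in particular $\overline{\rho}|_{\GQp}$ is scalar. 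But a scalar representation of the form $\overline{\beta}\oplus(\overline{\beta}\otimes\eta)$ forces $\eta$ to be trivial, i.e. $K=\Q_p$, a contradiction. Hence $\overline{\rho}|_{\GQp}$ is irreducible, equivalently $\overline{\alpha}\neq\overline{\alpha}'$; and since $\overline{\F}_p$ is algebraically closed, irreducibility here is the same as absolute irreducibility.

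The only input beyond formal manipulation of induced representations (all of which is already packaged in Lemma \ref{InducedReducibleAndSemisimple}) is the Minkowski discriminant bound, and the one point to be careful about is the archimedean place: even when $\overline{\rho}$ is odd, so that ${\rm P}\overline{\rho}$ is ramified at $\infty$, this does not affect the discriminant of $L$, which is still $\pm1$ once ${\rm P}\overline{\rho}$ is unramified at all finite primes, so Minkowski's theorem applies as stated. I expect this to be the entire proof; no genuine obstacle is anticipated.
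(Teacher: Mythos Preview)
Your proposal is correct and follows essentially the same route as the paper: reduce via Lemma \ref{InducedReducibleAndSemisimple} to $\overline{\rho}|_{\GQp}\cong\overline{\beta}\oplus(\overline{\beta}\otimes\eta)$, observe that $\eta$ is unramified so ${\rm P}\overline{\rho}$ is unramified at all finite places, and invoke Minkowski (the paper phrases this as ``the narrow class number of $\Q$ is $1$'') to force ${\rm P}\overline{\rho}$ trivial, contradicting $\eta\neq 1$. The only cosmetic difference is that the paper derives the contradiction directly from ${\rm P}\overline{\rho}|_{\GQp}\cong\eta$ rather than passing through ``$\overline{\rho}$ scalar.''
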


\begin{proof}
	Let $\eta:\GQp\to \GQp/G_K\cong \{\pm 1\}\subset \overline{\F}_p^{\times}$ be the unramified quadratic character of $\GQp.$ Suppose that $\overline{\rho}|_{\GQp}\cong\Ind^{\GQp}_{G_K}\overline{\alpha}$ is reducible. By Lemma \ref{InducedReducibleAndSemisimple}, we have $\overline{\rho}|_{\GQp}$ splits into the direct sum $\overline{\alpha}\oplus (\overline{\alpha}\otimes \eta)$ and so ${\rm P}\overline{\rho}|_{\GQp}\cong\eta.$  
	Since $\overline{\rho}$ is unramified outside $\{p,\infty\},$ the projective representation ${\rm P}\overline{\rho}$ is now unramified at all finite places. The fixed field of ${\rm P}\overline{\rho}$ is then trivial because the narrow class number of $\Q$ is 1. This contradicts ${\rm P}\overline{\rho}|_{\GQp}\cong\eta.$ 
\end{proof}

\begin{lem}\label{InducedDihedral}
	The projective image ${\rm P}\overline{\rho}(\GQp)$ is isomorphic to a non-cyclic dihedral group $D_{2d}$ of order $2d$ and ${\rm P}\overline{\rho}(G_K)$ is isomorphic to a cyclic group of order $d.$ In the case $\rho=\rho_f,$ we have $d=\frac{p+1}{\gcd(k-1,p+1)};$ in particular, $d$ and $p+1$ have the same $2$-adic valuation.
\end{lem}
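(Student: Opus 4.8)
The plan is to obtain the first assertion for free from the abstract lemmas of \S\ref{Prelim} and then to compute $d$ explicitly in the modular case. First, note that $\overline{\rho}|_{\GQp}$ has finite image, being a continuous representation of the profinite group $\GQp$ valued in $\GL_2$ of a finite field. By Lemma \ref{AbsIrreducible} it is absolutely irreducible, and by the standing hypothesis it is induced from the character $\overline{\alpha}$ of the index-two subgroup $G_K$. Hence Lemma \ref{InducedProjectiveDihedral}, applied with $(G,H)=(\GQp,G_K)$, shows at once that ${\rm P}\overline{\rho}(\GQp)$ is a non-cyclic dihedral group $D_{2d}$ of order $2d$ with ${\rm P}\overline{\rho}(G_K)$ cyclic of order $d$.

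Next I would pin down $d$. From $\overline{\rho}|_{G_K}\cong\overline{\alpha}\oplus\overline{\alpha}'$ one sees that, after a suitable conjugation, ${\rm P}\overline{\rho}|_{G_K}$ is given by $g\mapsto\diag(\overline{\alpha}(g)\overline{\alpha}'(g)^{-1},1)$ in ${\rm P}\GL_2$, so $d$ equals the order of the character $\overline{\alpha}(\overline{\alpha}')^{-1}:G_K\to\overline{\F}_p^{\times}$. Specializing to $\rho=\rho_f$, we have $\overline{\alpha}=\overline{\epsilon}_2^{k-1}\otimes\overline{\psi}|_{G_K}$ and $\overline{\alpha}'=\overline{\epsilon}_2^{p(k-1)}\otimes\overline{\psi}|_{G_K}$ (using $\overline{\epsilon}_2'=\overline{\epsilon}_2^p$ from \S\ref{Prelim}), so the $\overline{\psi}$-twist cancels and $\overline{\alpha}(\overline{\alpha}')^{-1}=\overline{\epsilon}_2^{(k-1)(1-p)}$. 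The character $\overline{\epsilon}_2$ has order $p^2-1$: by local class field theory its image is $k_K^{\times}=\F_{p^2}^{\times}$ (indeed $\overline{\epsilon}_2$ restricted to inertia is a fundamental character of level $2$). Therefore
\[
d=\frac{p^2-1}{\gcd\big((k-1)(p-1),\,p^2-1\big)}=\frac{(p-1)(p+1)}{(p-1)\,\gcd(k-1,\,p+1)}=\frac{p+1}{\gcd(k-1,\,p+1)},
\]
using $p^2-1=(p-1)(p+1)$ and factoring the common $p-1$ out of the gcd.

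For the last clause, recall that a nonzero cuspidal eigenform of level one has even weight $k$, so $k-1$ is odd and hence $\gcd(k-1,p+1)$ is odd; since $p$ is odd, $p+1$ is even, and thus $v_2(d)=v_2(p+1)-v_2(\gcd(k-1,p+1))=v_2(p+1)$. I do not expect a genuine obstacle; the only points needing a little care are the identification of the order of $\overline{\epsilon}_2$ and the elementary gcd manipulation. As a consistency check, $(p+1)\nmid(k-1)$ necessarily holds — otherwise $\overline{\alpha}=\overline{\alpha}'$ and $\overline{\rho}|_{\GQp}$ would be reducible by Lemma \ref{InducedReducibleAndSemisimple}, contradicting Lemma \ref{AbsIrreducible} — which is precisely the assertion $d\ge 2$, i.e. that the dihedral group in question is non-cyclic.
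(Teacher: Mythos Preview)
Your proof is correct and follows essentially the same approach as the paper: invoke Lemma~\ref{AbsIrreducible} and Lemma~\ref{InducedProjectiveDihedral} for the dihedral structure, then compute $d$ as the order of $\overline{\epsilon}_2^{(k-1)(1-p)}$ using that $\overline{\epsilon}_2$ has order $p^2-1$, and finish with the parity observation on $k$. Your additional consistency check that $d\ge 2$ is a nice touch but not needed, since non-cyclicity already follows from Lemma~\ref{InducedProjectiveDihedral}.
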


\begin{proof}
	By Lemma \ref{InducedProjectiveDihedral} we immediately deduce that ${\rm P}\overline{\rho}(\GQp)$ has non-cyclic dihedral image $D_{2d}$ and $\overline{\alpha}'^{-1}\overline{\alpha}$ is cyclic of order $d.$ Now assume that $\rho=\rho_f.$ Then $d$ is the order of $({\overline{\epsilon}_2'}^{-1}\overline{\epsilon}_2)^{k-1}=\overline{\epsilon}_2^{(p-1)(k-1)}$.  Given that $\overline{\epsilon}_2$ is cyclic of order $p^2-1,$ we have $$d=\frac{p^2-1}{\gcd((p-1)(k-1),p^2-1)}=\frac{p+1}{\gcd(k-1,p+1)}.$$ Since the weight $k$ of a level one modular form $f$ is even and $p$ is odd, $d$ and $p+1$ have the same $2$-adic valuation as claimed.
\end{proof}

\subsection{The fixed field of exceptional $\overline{\rho}$}\label{ExceptionalNumberField}
Let $\overline{\rho}$ be as before and let $L$ be the fixed field of  $P\overline{\rho}$
; then $L/\Q$ is a Galois extension unramified outside $\{p,\infty\}$ with Galois group $G\cong {\rm P}\overline{\rho}(G_\Q)$. 
The decomposition group $D$ of some (every) prime ideal of $L$ lying above $p$ is ${\rm P}\overline{\rho}(\GQp)$, which is isomorphic to $D_{2d}$ by Lemma \ref{InducedDihedral}. Since the inertia subgroup $I$ is a subgroup of ${\rm P}\overline{\rho}(G_K)\cong\Z/d\Z$ and $D/I$ is also cyclic, by the group structure of $D_{2d},$ we deduce that $I\cong {\rm P}\overline{\rho}(G_K)\cong \Z/d\Z.$

\begin{lem}\label{S4A5}
	Suppose that $\overline{\rho}$ is exceptional. Then the projective image $G\cong {\rm P}\overline{\rho}(G_\Q)$ is isomorphic to $S_4$ or $A_5$. 
\end{lem}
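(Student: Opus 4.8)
The plan is to invoke Proposition \ref{ExceptionalResidualImage}, which leaves only two possibilities to eliminate: that $G := {\rm P}\overline{\rho}(G_\Q)$ is dihedral, or that $G \cong A_4$. Two inputs will do the work. First, every subextension of $L/\Q$ is unramified outside $\{p,\infty\}$, and class field theory tells us the only quadratic extension of $\Q$ with that property is $\Q(\sqrt{p^*})$, which moreover ramifies at $p$; also $\Q$ has no nontrivial abelian extension unramified at all places. Second, by Lemma \ref{InducedDihedral} and the discussion opening \S\ref{ExceptionalNumberField}, the decomposition group $D := {\rm P}\overline{\rho}(\GQp)$ is a non-cyclic dihedral group $D_{2d}$ and the inertia group $I$ is its index-two cyclic subgroup ${\rm P}\overline{\rho}(G_K)\cong\Z/d\Z$. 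In particular this already forces $G$ to be non-cyclic, since a cyclic group has no non-cyclic dihedral subgroup.

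To rule out the dihedral case, write $G \cong D_{2n}$ with $n \ge 2$. If $n$ is even, then $G^{\ab} \cong (\Z/2)^2$, so $L$ contains three distinct quadratic subfields, each unramified outside $\{p,\infty\}$ --- impossible, as $\Q$ has only one such extension. If $n$ is odd, then a dihedral subgroup $D_{2d}$ of $D_{2n}$ forces $d \mid n$, so $d$ is odd; hence the cyclic group $I$ of odd order $d$ contains no reflection and must lie in the rotation subgroup $\langle r\rangle = [G,G]$. Therefore the inertia group of $p$ in $\Gal(L^{[G,G]}/\Q)$ is trivial, i.e.\ $p$ is unramified in the unique quadratic subfield $L^{[G,G]} = \Q(\sqrt{p^*})$, contradicting the fact that $\Q(\sqrt{p^*})$ ramifies at $p$.

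To rule out $G \cong A_4$, observe that the only subgroup of $A_4$ isomorphic to a non-cyclic dihedral group is the normal Klein four-group $V$; hence $D = V$, $d = 2$, and in particular $I \subseteq V$. Then $M := L^V$ is cyclic cubic over $\Q$, unramified outside $\{p,\infty\}$ since $M \subseteq L$, and unramified at $p$ since $I \subseteq V = \Gal(L/M)$; being totally real of odd degree it is unramified at $\infty$ as well. Thus $M/\Q$ is everywhere unramified abelian, so $M = \Q$, contradicting $[M:\Q] = 3$. With dihedral and $A_4$ excluded, Proposition \ref{ExceptionalResidualImage} yields $G \cong S_4$ or $A_5$. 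All of this is elementary group theory together with the class-number-one property of $\Q$; the one point requiring care --- and the reason the dihedral case is split according to the parity of $n$ --- is checking that the local data recorded in \S\ref{ExceptionalNumberField} (namely that $I$ is cyclic of order $d$ with $d$ dividing $|G|$) is enough to push $I$ into the correct abelian quotient in each subcase, which is what turns a statement about ramification at $p$ into a contradiction.
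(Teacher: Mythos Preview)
Your proof is correct and follows essentially the same route as the paper's. Both arguments eliminate the dihedral and $A_4$ possibilities from Proposition~\ref{ExceptionalResidualImage} using exactly the two facts you name: uniqueness (and ramification at $p$) of the quadratic field $\Q(\sqrt{p^*})$, and the triviality of the narrow class group of $\Q$. The only cosmetic difference is organizational: the paper handles the dihedral case by first deducing $n$ odd from uniqueness of the quadratic subfield and then deriving $2\mid d$ from ramification, whereas you split on the parity of $n$ and run each half directly; similarly in the $A_4$ case the paper phrases the contradiction as ``$L^{D}$ must ramify at $p$, hence $3\mid d$'' while you phrase it contrapositively as ``$I\subseteq V$ forces $L^{V}/\Q$ to be everywhere unramified.'' These are the same argument.
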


\begin{proof}
	We prove the lemma by showing other cases in the exhaustive list in Proposition \ref{ExceptionalResidualImage} cannot happen. 
	Suppose that $G$ is isomorphic to a dihedral group $D_{2n}$ of order $2n.$ Since $\Q(\sqrt{p^*})$ is the unique quadratic extension of $\Q$ unramified outside $\{p,\infty\},$ by Galois correspondence, $D_{2n}$ has a unique index two subgroup. Thus $n$ is odd. But since $\Q(\sqrt{p^*})$ is ramified at $p,$ we have $d=|I|$ is divisible by two and so $n$ is even, a contradiction.  Suppose that $G$ is isomorphic to $A_4.$ Since the only non-cyclic dihedral subgroup of $A_4$ is $D_4,$ we have $D\cong D_4$ and so $|I|=d=2.$ Since $D_4$ is normal in $A_4,$ the fixed field of $D$ is Galois over $\Q.$ It is ramified at $p,$ so $3$ divides $d,$ a contradiction. Thus $G$ is isomorphic to either $S_4$ or $A_5.$
\end{proof}

\begin{remark}\label{NotGloballyInduced}
	Note that if $\rho$ is globally induced, then by Lemma \ref{InducedProjectiveDihedral}, ${\rm P}\overline{\rho}(G_\Q)$ is isomorphic to a dihedral group, which is impossible by the proof of the lemma above. Thus $\rho$ is not globally induced.
\end{remark}

Assume from now on that $\overline{\rho}$ is exceptional.

\begin{lem}\label{GDI}
	\leavevmode
	\begin{enumerate}
		\item [(a)] If $G\cong S_4,$ then the decomposition group $D$ is embedded as either a non-normal $D_4$ with $I$ identified with the subgroup generated by a single transposition in $S_4,$ or $D_8$ with $I\cong\Z/4\Z$.
		\item [(b)] If $G\cong A_5,$ then the decomposition group $D$ is isomorphic to $D_4$ with $I\cong \Z/2\Z$, $D_6$ with $I\cong \Z/3\Z$ or $D_{10}$ with $I\cong\Z/5\Z.$ 
	\end{enumerate} 
\end{lem}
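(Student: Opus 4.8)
The plan is to combine the two group-theoretic facts already isolated in \S\ref{ExceptionalNumberField} --- that $D$ is a non-cyclic dihedral subgroup $D_{2d}$ of $G$ with $d\ge 2$, and that $I$ is its unique cyclic subgroup of order $d$ (the ``rotation'' subgroup of $D_{2d}$) --- with one arithmetic input, and then to read the answer off the subgroup lattices of $S_4$ and $A_5$. The arithmetic input I would record first is: for any subgroup $H\le G$, the subfield $L^H$ is ramified at $p$ if and only if $I\not\subseteq H$. This follows from multiplicativity of ramification indices in the tower $L\supseteq L^H\supseteq\Q$ together with the fact that the inertia subgroup of the fixed prime in $L/L^H$ is $I\cap H$. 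In particular, when $G\cong S_4$ the unique index-two subgroup is $A_4$, so $L^{A_4}$ is the unique quadratic subextension of $L/\Q$ and hence equals $\Q(\sqrt{p^*})$; as $p$ ramifies there, we conclude $I\not\subseteq A_4$, i.e. the cyclic group $I$ contains an odd permutation.

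For (a) I would list the non-cyclic dihedral subgroups of $S_4$ up to conjugacy: the Klein four-groups ($d=2$), the copies of $S_3\cong D_6$ ($d=3$), and the Sylow $2$-subgroups $D_8$ ($d=4$); these are all, since $2d\mid 24$ leaves only $d\in\{2,3,4,6,12\}$, and $S_4$ has no subgroup isomorphic to $D_{12}$ (its order-$12$ subgroup $A_4$ is not dihedral) or to $D_{24}$. The constraints that $I$ is cyclic of order $d$ and contains an odd permutation then eliminate $d=3$ (an order-three cyclic subgroup of $S_4$ is generated by a $3$-cycle, which is even), force $I=\langle\tau\rangle$ for a transposition $\tau$ when $d=2$, and force $I$ to be the order-four rotation subgroup of $D_8$, generated by a $4$-cycle, when $d=4$. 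In the case $d=2$, $I\not\subseteq A_4$ also shows $D$ is not the normal Klein four-group (which lies in $A_4$), so $D$ is one of the non-normal Klein four-groups; concretely, since $D$ is abelian it lies in $C_{S_4}(\tau)$, which is itself a Klein four-group $\{1,\tau,\tau',\tau\tau'\}$ with $\tau'$ the transposition disjoint from $\tau$, and therefore equals it. This yields exactly the two possibilities in (a).

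For (b), $G\cong A_5$ is simple, so there is no quadratic subextension and no ramification obstruction is available --- but none is needed. The non-cyclic dihedral subgroups of $A_5$ up to conjugacy are the Klein four-groups ($d=2$), the copies of $S_3\cong D_6$ ($d=3$), and the normalizers of Sylow $5$-subgroups, $D_{10}$ ($d=5$); these are all, since $2d\mid 60$ leaves only $d\in\{2,3,5,6,10,15,30\}$, while $A_5$ has no subgroup of order $20$ or $30$ and its subgroups of orders $12$ and $60$ are not dihedral. In each of the three surviving cases $I$ is the cyclic rotation subgroup of $D_{2d}$, of order $2$, $3$, or $5$ respectively, which is precisely the list in (b).

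The only place where anything beyond a subgroup enumeration is needed is the elimination of the case $d=3$ in part (a), which hinges on correctly extracting $I\not\subseteq A_4$ from the fact that $\Q(\sqrt{p^*})$ is the unique quadratic field unramified outside $\{p,\infty\}$ and is ramified at $p$. I expect this --- more precisely, making the ramification-multiplicativity bookkeeping clean, exactly as underlies the proof of Lemma \ref{S4A5} --- to be the main (if mild) point to get right; everything else is a finite inspection of the subgroups of $S_4$ and $A_5$.
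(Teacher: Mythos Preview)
Your proof is correct and follows the same overall strategy as the paper---enumerate the non-cyclic dihedral subgroups of $G$ and use that certain subfields of $L$ must be ramified at $p$ to eliminate unwanted cases---but you package the ramification input more efficiently. You extract once and for all that $I\not\subseteq A_4$ (since $L^{A_4}=\Q(\sqrt{p^*})$ is ramified at~$p$) and apply it uniformly: it kills $d=3$ because $3$-cycles are even, and in the $d=2$ case it forces $I$ to be generated by a transposition rather than a double transposition, which simultaneously rules out the normal Klein four-group. The paper instead handles these two eliminations with different subfields: for $D\cong D_6$ it uses the quadratic subfield (essentially your argument), but to exclude $I$ being generated by a double transposition it picks a degree-three subfield $E=L^{D_8}$, notes that every Sylow $2$-subgroup of $S_4$ contains all three double transpositions, and derives a contradiction from $E/\Q$ being unramified at~$p$. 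Your single principle is cleaner, and your centralizer description $D=C_{S_4}(\tau)$ in the $d=2$ case is a small bonus the paper leaves implicit. (One caveat: your general ``$L^H$ ramified at $p$ iff $I\not\subseteq H$'' is only literally correct for normal $H$; for arbitrary $H$ the ``only if'' direction can fail. But you only invoke it for $H=A_4$, where it is fine.)
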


\begin{proof}
	When $G\cong S_4,$ the only non-cyclic dihedral subgroups $D_{2d}$ of $S_4$ are, up to conjugation, the normal $D_4$, the non-normal $D_4,$ $D_6$ and $D_8.$ If $D$ is of order four, we need to show that $I$ is generated by a single transposition. Pick any subfield $E\subseteq L$ of degree $[E:\Q]=3.$ (They are all conjugate.) Then $\Gal(L/E)$ is isomorphic to $D_8$, which necessarily contains all three double transpositions in $S_4$. If $I$ identified with the subgroup generated by a double transposition, then $I\subseteq \Gal(L/E)$ and thus $p$ is unramified in $E/\Q.$ This is impossible for the narrow class group of $\Q$ is trivial. Suppose that $D\cong D_6.$ Since the quadratic extension of $\Q$ contained in $L$ is ramified at $p,$ the inertia degree $|I|$ is even, contradicting $|I|=d=3.$ If $D$ is isomorphic to $D_8,$ the inertia subgroup is identified with the cyclic subgroup $\Z/4\Z$.
	
	When $G\cong A_5,$ up to conjugation the only non-cyclic dihedral subgroups $D_{2d}$ of $A_5$ are $D_4,D_6$ and $D_{10}.$ Thus the inertia subgroups are identified with $\Z/2\Z,\Z/3\Z$ and $\Z/5\Z$ respectively.  
\end{proof}

\subsection{Proof of Theorem \ref{Theorem}} 
Let $f = \sum_{n \ge 1} a_n q^n$ be a cuspidal eigenform with coefficients
in $E = \mathbf{Q}(a_1, a_2, ... )$ of level one and $a_p(f)=0$. By the main theorem of \cite{Hatada1979}, the eigenvalue $a_p(f)$ of the Hecke operator $T_p$ is congruent to $1 + p$ modulo $8$ when $p \neq 2$.
It follows that if $a_p(f)=0$ then $1+p\equiv0\pmod 8$ and thus $p\equiv 7 \pmod 8$. Suppose that for
some embedding $E\hookrightarrow \overline{\Q}_p,$ the $p$-adic representation $\rho_f$ associated to $f$ has exceptional image. We identify $f$ with its image under $S_k(\Gamma(1),E)\hookrightarrow S_k(\Gamma(1),\overline{\Q}_p).$ 
It follows from Lemma \ref{InducedDihedral} and Lemma \ref{GDI} that if $p\neq 2,$ then $v_2(p+1)=v_2(d)\le 2.$ Thus $p$ is not congruent to $7$ modulo $8$, a contradiction. When $p=2,$ it is proved in \cite[\S2.6]{CalegariSardari2021vanishing} that there is no level 1 eigenform $f$ with $a_2(f)=0,$ which completes the proof.

\section{A Galois Deformation Problem}
\subsection{Deformation functor associated to $\overline{\rho}$}\label{DeformationFunctor}
Let us summarize all assumptions we have made in \S\ref{LocallyInduced} and \S\ref{ExceptionalNumberField} and add a few more to be assumed throughout this section.
We use Mazur's definition of deformations in  \cite{mazur1989deforming} due to the convenience of carrying out explicit calculations. 
	\begin{assumption}\label{Assumption}
		Fix $p>2.$ Let $\rho:G_\Q\to \GL_2(T)$ be a Galois representation that is unramified outside $\{p,\infty\},$ where $T$ is a finite extension of $\Q_p$, and assume that $\rho(G_\Q)$ is contained in $\GL_2(\mathcal{O}_T).$ Locally at $p$, suppose that $\rho$ is induced from $G_K$ and takes the form $$\rho|_{\GQp}=\Ind^{\GQp}_{G_K}\alpha,\quad \rho|_{G_K}=\diag\{\alpha,\alpha'\}$$ for characters $\alpha,\alpha':G_K\to T^{\times}.$
		Suppose further that the semisimple residual representation $\overline{\rho}:G_\Q\to \GL_2(\kfield)$ is exceptional for $\kfield=\kfield_T$. As a result, the order of $\overline{\rho}(G_\Q)$ is prime to $p$ by Proposition \ref{ExceptionalResidualImage} and Lemma \ref{AbsIrreducible}. By extending $T$ if necessary, we assume that $T$ satisfies the following: \begin{itemize}
			\item the eigenvalues of every element in the image of $\overline{\rho}$ land in $\kfield$;
			\item $\mathcal{O}_K$ is a subring of $W(\kfield);$
			\item and the irreducible characters of $S_4$ and $A_5$ are all defined over $\kfield.$
		\end{itemize}
		We have the identifications $$\overline{\rho}|_{\GQp}=\Ind^{\GQp}_{G_K}\overline{\alpha}\quad{\rm and}\quad \overline{\rho}|_{G_K}=\diag\{\overline{\alpha},\overline{\alpha}'\}.$$ Since $\overline{\rho}|_{\GQp}$ is absolutely irreducible, by Lemma \ref{InducedReducibleAndSemisimple}, the two characters $\overline{\alpha}$ and
		$\overline{\alpha}'$ are distinct. 
	\end{assumption}
	
	We now define a deformation problem associated to $\overline{\rho}.$ Let $\mathscr{C}$ be the category of local Artinian $W(\kfield)$-algebras $(A,\mathfrak{m})$ with a fixed identification $A/\mathfrak{m}\cong \kfield$.
	
	\begin{definition}\label{DefinitionOfD}
	Denote by $\mathbf{D}$ the deformation functor associated to $\overline{\rho}$ satisfying the following deformation conditions. For $(A,\mathfrak{m})$ that is an object in $\mathscr{C},$ every deformation 
	$[\rho_A]\in \mathbf{D}(A)$ \begin{itemize}
		\item is of determinant $\chi:G_\Q\xrightarrow{\det \overline{\rho}}\kfield^{\times}\to W(\kfield)^{\times}$ where $\kfield^{\times}\to W(\kfield)^{\times}$ is the Teichmuller character,
		\item  is unramified outside $\{p,\infty\}$,
		\item  and has a representative $\rho_A$ such that $\rho_A|_{G_K}=\diag\{\beta,\beta’\}$ where $\beta$ and $\beta'$ are $G_K$-characters which are $G_K$-liftings (deformations) of $\overline{\alpha}$ and $\overline{\alpha}'$ respectively. 
	\end{itemize} 
	\end{definition}
	
	\begin{remark} Since $\overline{\alpha}$ and $\overline{\alpha}'$ are distinct, the last condition implies that every representations in $[\rho_A]$ takes the form $\diag\{\beta,\beta' \}$ when restricted to $G_K$ by \cite[Lemma 1.3.3]{CalegariSardari2021vanishing}. Hence, this is a deformation condition as shown in \cite[Lemma 2.3.2]{CalegariSardari2021vanishing}. It follows from this condition that $[\rho_A]$ is locally induced from $\beta$, or equivalently from $\beta'$. 
	Indeed, by Frobenius reciprocity, we have \begin{multline*}
		\Hom_{\GQp}(\Ind^{\GQp}_{G_K}\beta,\rho_A)\cong\Hom_{G_K}(\beta, \res^{\GQp}_{G_K}\rho_A)\\=\Hom_{G_K}(\beta,\beta\oplus\beta') = \End_{G_K}(\beta)\oplus \Hom_{G_K}(\beta,\beta').
	\end{multline*} If $\varphi$ is a $G_K$-homomorphism from $\beta$ to $\beta'$, then for every $a\in A,$ $$\beta'(g)\varphi(a) =g\varphi(a)= \varphi(ga) = \varphi(\beta(g)a)=\beta(g)\varphi(a).$$ But since $\beta$ and $\beta'$ reduce to distinct characters $\overline{\alpha}$ and $\overline{\alpha}'$ modulo $\mathfrak{m},$ there is some $g\in G_K$ such that $\beta(g)-\beta'(g)$ is a unit in $A,$ which implies $\varphi(a)=0$ for all $a\in A.$ Thus the homomorphism group $\Hom_{G_K}(\beta,\beta')$ is trivial and we have $$\Hom_{\GQp}(\Ind^{\GQp}_{G_K}\beta,\rho_A)\cong \End_{G_K}(\beta),$$ which clearly contains an isomorphism. 
	\end{remark} The functor $\mathbf{D}$ is pro-represented by a complete local Noetherian ring $R$ by \cite[Proposition 2.1]{mazur1989deforming}. 
	Note that the deformation $[\rho]$ represented by $\rho:G_\Q\to \GL_2(\mathcal{O}_T)$ satisfies the last two conditions. We can twist $[\rho]$ to meet all three conditions and get an $\mathcal{O}_T$-point of $\Spec(R).$ In fact, we have $(\det\rho)^{-1}\chi \in 1+\mathfrak{m}_T$ where $\mathfrak{m}_T$ is the maximal ideal of $T$. Since the function $\sqrt{1+x}:=1+ \frac{x}{2}-\frac{x^2}{8}+\cdots$ converges for $x\in \mathfrak{m}_T$ when $p>2$, there exists a character $\chi'$ whose square is $(\det\rho)^{-1}\chi.$ This $\chi'$ is unramified outside $\{p,\infty\}$ and its reduction modulo $\mathfrak{m}_T$ is trivial. Hence $[\rho\otimes \chi']$  is an $\mathcal{O}_T$-point on $\Spec(R)$. We aim to show that $\Spec(R)$ cannot contain such a point unless it has finite image by analyzing the tangent space of $\mathbf{D}$. 
	
	We now describe the tangent space $\mathbf{D}(\kfield[x]/(x^2))$ of the deformation functor $\mathbf{D}$ as a Selmer group $H^1_{\Sigma}(G_\Q,\ad^0\overline{\rho})$ defined below. Since the local representation $\overline{\rho}|_{\GQp}$ is induced, by Lemma \ref{AdjointRepDecompWhenInduced}, $\ad^0\overline{\rho}$ as a $\GQp$-representation splits into $\eta\oplus \Ind^{\GQp}_{G_K} ({\overline{\alpha}'}^{-1}\overline{\alpha}),$ where $\eta$ is the character $\eta:\GQp\to \GQp/G_K\cong\{\pm 1\}\subseteq\kfield^{\times}.$ 
	\begin{definition}{\label{ConditionSigma}}
			Let $\Sigma=\left\{\Sigma_v:v{\rm \ places\ of\ }\Q\right\}$ be the collection of local conditions for $\ad^0\overline{\rho}$ defined by \begin{itemize}
				\item $\Sigma_v=H^1(G_v/I_v,\ad^0\overline{\rho})=\ker(\res:H^1(G_v,\ad^0\overline{\rho})\to H^1(I_v,\ad^0\overline{\rho}))$ being the unramified condition at finite places $v\nmid p,$
				\item $\Sigma_v=H^1(\GQp,\eta)$ at $v=p$,
				\item no conditions at $v=\infty$.
			\end{itemize} Denote by $H_\Sigma^1(G_\Q,\ad^0\overline{\rho})$ the preimage of $\prod_v\Sigma_v$ under $\res:H^1(G_\Q,\ad^0\overline{\rho})\to \prod_v H^1(G_v,\ad^0\overline{\rho}).$ 
	\end{definition}
	
	\begin{prop}\label{TangentSpace}
		The tangent space of the deformation functor $\mathbf{D}$ is  $H^1_\Sigma(G_\Q,\ad^0\overline{\rho})$.
	\end{prop}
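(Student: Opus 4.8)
The plan is to identify deformations $[\rho_A]$ over the dual numbers $A = \kfield[x]/(x^2)$ with cohomology classes in $H^1(G_\Q, \ad^0\overline\rho)$ and then check that the three deformation conditions in Definition \ref{DefinitionOfD} translate exactly into the three local conditions defining $\Sigma$. First I would recall the standard dictionary from \cite{mazur1989deforming}: since $\overline\rho$ has scalar centralizer (being absolutely irreducible, by Lemma \ref{AbsIrreducible} applied globally, or at least having trivial endomorphisms), the set of deformations to $\kfield[x]/(x^2)$ with \emph{no} extra conditions is canonically a torsor, indeed a $\kfield$-vector space, identified with $H^1(G_\Q, \ad\overline\rho)$, via $\rho_A = (1 + x\,c)\overline\rho$ where $c: G_\Q \to \ad\overline\rho$ is a cocycle. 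I would then impose the fixed-determinant condition: writing $\ad\overline\rho = \ad^0\overline\rho \oplus \kfield$ (valid since $p$ is odd, so $2$ is invertible), the determinant of $\rho_A$ is $\chi(1 + x\,\tr c)$, so $\det\rho_A = \chi$ forces the trace part of $c$ to vanish, i.e. $c$ takes values in $\ad^0\overline\rho$. This gives the identification of determinant-$\chi$ deformations with $H^1(G_\Q, \ad^0\overline\rho)$.

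Next I would handle the two remaining conditions place by place. For the unramified-outside-$\{p,\infty\}$ condition: a deformation $\rho_A$ is unramified at a finite $v \nmid p$ precisely when the restriction of the cocycle $c$ to $I_v$ is trivial (a coboundary over $I_v$ is automatically zero here since $\overline\rho$ is already unramified at $v$, so $I_v$ acts trivially on $\ad^0\overline\rho$ and there are no nontrivial coboundaries from the trivial module up to the relevant conjugation — more precisely $c|_{I_v}$ being a coboundary for $G_v$ means it restricts to zero on $I_v$). This is exactly the local condition $\Sigma_v = \ker(H^1(G_v, \ad^0\overline\rho) \to H^1(I_v, \ad^0\overline\rho))$. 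There is no condition at $\infty$, matching the third bullet. For the condition at $p$, I would use Lemma \ref{AdjointRepDecompWhenInduced}: as a $\GQp$-module, $\ad^0\overline\rho \cong \eta \oplus \Ind^{\GQp}_{G_K}(\overline\alpha'^{-1}\overline\alpha)$, so $H^1(\GQp, \ad^0\overline\rho) = H^1(\GQp, \eta) \oplus H^1(\GQp, \Ind^{\GQp}_{G_K}(\overline\alpha'^{-1}\overline\alpha))$. The claim is that a determinant-$\chi$ deformation $\rho_A$ is locally-at-$p$ of the required diagonal form $\diag\{\beta, \beta'\}$ over $G_K$ (equivalently, as the remark notes, locally induced from $\beta$) if and only if the $p$-component of $c$ lies in the $H^1(\GQp, \eta)$ summand, i.e.\ has trivial image in the $\Ind$-summand.

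The key step — and the main obstacle — is proving that last "if and only if," i.e.\ that the deformation-theoretic condition at $p$ (being diagonalizable over $G_K$ with the two characters lifting $\overline\alpha, \overline\alpha'$) corresponds precisely to the sub-line-bundle $H^1(\GQp,\eta)$. This is essentially the content of \cite[Lemma 2.3.2]{CalegariSardari2021vanishing}, already cited in the remark after Definition \ref{DefinitionOfD}, so I would invoke it: the point is that $\ad^0\overline\rho|_{\GQp} = \eta \oplus (\text{induced part})$, where the $\eta$-summand records deformations of the diagonal entries (scaling within the split torus) and the induced part records off-diagonal extensions that would break the diagonal form; since $\overline\alpha \neq \overline\alpha'$, Hensel's lemma lets us diagonalize over $G_K$ iff the off-diagonal cocycle class dies, i.e.\ iff the induced-part component vanishes. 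Combining: the tangent space $\mathbf D(\kfield[x]/(x^2))$ consists exactly of those $c \in H^1(G_\Q, \ad^0\overline\rho)$ whose restriction to each finite $v \nmid p$ is unramified and whose restriction to $\GQp$ lands in $H^1(\GQp, \eta)$, which is by definition $H^1_\Sigma(G_\Q, \ad^0\overline\rho)$. One should also check that $\mathbf D$ is well-defined as a functor and that the vector space structures agree, but these are routine given the cited results. I would close by noting that the functoriality in $A$ and the $\kfield$-linear structure are standard (Mazur), so the identification is as $\kfield$-vector spaces.
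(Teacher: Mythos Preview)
The proposal is correct and follows essentially the same approach as the paper: both treat the determinant and unramified conditions as standard, and then verify that the diagonal-over-$G_K$ condition at $p$ cuts out precisely the summand $H^1(\GQp,\eta)$ inside $H^1(\GQp,\ad^0\overline\rho)$. The only cosmetic difference is that the paper carries out the last step by a direct matrix computation---choosing the representative with $\widetilde\rho|_{G_K}=\diag\{\beta,\beta'\}$ and checking that the resulting cocycle is $\eta$-valued on all of $\GQp$---whereas you phrase it via the decomposition of $\ad^0\overline\rho|_{\GQp}$ and appeal to \cite[Lemma~2.3.2]{CalegariSardari2021vanishing}.
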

	
	\begin{proof}
		The Selmer group cut out by the first two conditions in Definition \ref{DefinitionOfD} is the preimage of $\prod_{v\nmid p}\Sigma_v$ under $H^1(G_\Q,\ad^0\overline{\rho})\to \prod_{v\nmid p}H^1(G_v,\ad^0\overline{\rho}).$ We are left to deal with the condition at $p.$ 
		
		Let $[\widetilde{\rho}]$ be a deformation in $\mathbf{D}(\kfield[x]/(x^2))$ represented by a lift $\widetilde{\rho}:G_\Q\to \GL_2(\kfield[x]/(x^2))$ such that $$\widetilde{\rho}|_{G_K}=\diag\{\beta,\beta'\}$$ where $\beta$ and $\beta'$ are lifts of $\overline{\alpha}$ and $\overline{\alpha}'$ respectively. Denote by $c:G_\Q\to \ad^0\overline{\rho}$ the cocycle determined by $\widetilde{\rho}$ via the equation $$\widetilde{\rho}=(I+x\cdot c)\overline{\rho}.$$
	Comparing the two formula of $\widetilde{\rho}|_{G_K}$, we conclude that $c$ restricted to $G_K$ is valued in the subrepresentation $\eta.$ The same argument shows that $c$ restricted to ${\GQp\setminus G_K}$ is valued in $\eta$ as well. Thus the cohomology class determined by $c$ lands in $H^1(\GQp,\eta).$ 
	Conversely, one checks that a cohomology class in $H^1(\GQp,\eta)$ determines a deformation $[\widetilde{\rho}]\in \mathbf{D}(\kfield[x]/(x^2))$ that admits a splitting when restricted to $G_K$. This gives the desired description of the tangent space.
\end{proof}

\subsection{Vanishing conditions} In this subsection we study when the tangent space $H^1_{\Sigma}(G_\Q,\ad^0\overline{\rho})$ of $\mathbf{D}$ vanishes. 

Continue with the notations in the previous subsection. By Galois theory, the decomposition group $D,$ when viewed as a subgroup of $G,$ fixes a number field $F$ with $\Gal(L/F)= D$ isomorphic to a dihedral group $D_{2d}$. It follows from Lemma \ref{InducedProjectiveDihedral} that $\overline{\rho}$ restricted to $G_F$ is induced from an index two subgroup 
of $G_F$. Note that the action of $g\in G_\Q$ on $\ad^0\overline{\rho}$ only depends on its projective image. Since the projective images of $G_F$ and $\GQp$ coincide, both being $D$, by abuse of notation, we say 
$\ad^0\overline{\rho}$ splits into $\eta\oplus \Ind^{\GQp}_{G_K}(\overline{\alpha}'^{-1}\overline{\alpha})$ as a $G_F$-representation as well. Denote the second summand by $W.$

We define a few more Selmer groups.
\begin{definition}\label{CohomologyCondition}
	Suppose that $E$ is a number field and let $X$ be a finite $G_E$-module. Denote by $w$ finite places of $E.$ 
	\begin{itemize}
		\item Let $H_\mathcal{L}^1(G_E,X)$ be the preimage of $\prod_{w\nmid p}H^1(G_w/I_w,X^{I_w})\times \prod_{w|p}\{0\}$ under the restriction map $\res: H^1(G_E,X)\to \prod_w H^1(G_w,X),$ i.e., the Selmer group consisting of cohomology classes that are unramified at all finite places and vanishing at places dividing $p;$ 
		\item let $H_\mathcal{M}^1(G_E,X)$ be the preimage of $\prod_{w}H^1(G_w/I_w,X^{I_w})$ under the restriction map $\res:H^1(G_E,X)\to \prod_w H^1(G_w,X),$ i.e., the Selmer group consisting of cohomology classes that are unramified at all finite places;
		\item and let $H_\mathcal{N}^1(G_E,X)$ be the preimage of $\prod_{w\nmid p}H^1(G_w/I_w,X^{I_w})$ under the restriction map $\res:H^1(G_E,X)\to \prod_{w\nmid p} H^1(G_w,X)$, i.e., the Selmer group consisting of cohomology classes that are unramified away from places dividing $p$ or $\infty.$ 
	\end{itemize}
\end{definition}

 Recall the tangent space $H^1_\Sigma(G_\Q,\ad^0\overline{\rho})$ in Definition \ref{ConditionSigma}. The cohomology classes $[c]$ in the tangent space are exactly those that are mapped into $H^1(\GQp,\eta)$ under $$\res: H^1_{\mathcal{N}}(G_\Q,\ad^0\overline{\rho})\to H^1(\GQp,\eta)\oplus H^1(\GQp,W).$$ It is not obvious how to compute these classes directly, or equivalently, to determine whether the deformation $[\widetilde{\rho}]\in \mathbf{D}(\kfield[x]/(x^2))$ corresponding to $[c]$ is locally induced or not. But if we assume that the residual representation $\overline{\rho}$ is globally induced, then one can hope to show a deformation of $\overline{\rho}$ to $\kfield[x]/(x^2)$ is locally induced if and only if it is globally induced, providing a criterion to determine whether the deformation is locally induced. So we restrict $\overline{\rho}$ to $G_F$ and study when classes in $H^1_{\mathcal{N}}(G_F,\ad^0\overline{\rho})$ are sent into $\prod_{w|p}H^1(G_w,\eta)$ under $$(\res_1,\res_2): H^1_{\mathcal{N}}(G_F,\ad^0\overline{\rho})\cong H^1_{\mathcal{N}}(G_F, \eta)\oplus H^1_{\mathcal{N}}(G_F,W)\to \prod_{w|p} H^1(G_w,\eta)\oplus \prod_{w|p} H^1(G_w,W).$$ Here, $w$ are places of $F$. We relate the two restriction maps $\res$ and $(\res_1,\res_2)$ by the following commutative diagram. We denote the vertical restriction map on the left by $\Psi$ and that on the right by $\Psi_{\loc}$.

 \begin{equation*}\label{Diagram}\tag{\textasteriskcentered}
\begin{tikzcd}
	H^1_{\mathcal{N}}(G_\Q,\ad^0\overline{\rho}_f)\arrow{rr}{\res} \arrow{d}[swap]{\Psi} && H^1(\GQp,\eta)\oplus H^1(\GQp,W) \arrow{d}{\Psi_\loc} \\
	H^1_{\mathcal{N}}(G_F, \eta)\oplus H^1_{\mathcal{N}}(G_F,W)\arrow{rr}{(\res_1,\res_2)}&&\prod_{w|p} H^1(G_w,\eta)\oplus \prod_{w|p} H^1(G_w,W).
\end{tikzcd}
\end{equation*} 

\begin{lem}\label{Main}
	In diagram (\ref{Diagram}), if \begin{enumerate}
		\item $\Psi$ does not map a nontrivial cohomology class into $H^1_{\mathcal{N}}(G_F,\eta)$, 
		\item and the restriction map $\res_2:H^1_{\mathcal{N}}(G_F,W)\to \prod_{w|p}H^1(G_w,W)$ is an injection,
	\end{enumerate} then the tangent space $H^1_\Sigma(G_\Q,\ad^0\overline{\rho})$ vanishes. 
\end{lem}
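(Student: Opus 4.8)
The argument I have in mind is a diagram chase through (\ref{Diagram}); hypotheses (1) and (2) are tailored precisely so that the chase closes up, so the lemma is essentially formal once one grants that (\ref{Diagram}) commutes and that the vertical maps $\Psi$ and $\Psi_{\loc}$ are compatible with the splitting $\ad^0\overline{\rho}=\eta\oplus W$. (This last compatibility holds because the $G_\Q$-action on $\ad^0\overline{\rho}$ factors through the projective image, and $G_F$, $\GQp$, and each decomposition group $G_w$ with $w\mid p$ all have the same dihedral projective image $D$, so the splitting is literally the same on each and is preserved by restriction.)

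The plan is as follows. Take an arbitrary class $[c]$ in the tangent space $H^1_\Sigma(G_\Q,\ad^0\overline{\rho})$. By the description recorded just before the lemma, $[c]$ lies in $H^1_{\mathcal{N}}(G_\Q,\ad^0\overline{\rho})$ and $\res([c])$ lies in the summand $H^1(\GQp,\eta)$; that is, the $H^1(\GQp,W)$-component of $\res([c])$ is zero. Since $\Psi_{\loc}$ acts summand-by-summand on $\eta\oplus W$, the element $\Psi_{\loc}(\res([c]))$ has trivial component in $\prod_{w\mid p}H^1(G_w,W)$. Writing $\Psi([c])=(a,b)$ with $a\in H^1_{\mathcal{N}}(G_F,\eta)$ and $b\in H^1_{\mathcal{N}}(G_F,W)$ (restriction from $G_\Q$ to $G_F$ preserves being unramified away from $p$, so this is legitimate), commutativity of (\ref{Diagram}) gives $(\res_1(a),\res_2(b))=\Psi_{\loc}(\res([c]))$, hence $\res_2(b)=0$. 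By hypothesis (2), $\res_2$ is injective, so $b=0$, and therefore $\Psi([c])$ lands in $H^1_{\mathcal{N}}(G_F,\eta)\oplus\{0\}$. Then hypothesis (1) forces $[c]=0$. As $[c]$ was arbitrary, $H^1_\Sigma(G_\Q,\ad^0\overline{\rho})=0$, which is the assertion.

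There is no serious obstacle inside this lemma; the one place to be careful is the interplay of the two ``$\eta$-conditions.'' The $p$-condition defining the tangent space is that $\res([c])$ lie in $H^1(\GQp,\eta)$, whereas hypothesis (1) concerns $\Psi([c])$ lying in $H^1_{\mathcal{N}}(G_F,\eta)$; one is entitled to pass from the first to the second only after using injectivity of $\res_2$ to eliminate the $W$-component of $\Psi([c])$. The genuinely substantive work --- checking that hypotheses (1) and (2) actually hold, where the nicely-exceptional assumption and the class-number hypotheses enter --- is carried out in the subsequent subsections and is not part of this lemma.
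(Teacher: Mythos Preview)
Your proof is correct and follows essentially the same diagram chase as the paper: starting from $[c]\in H^1_\Sigma(G_\Q,\ad^0\overline{\rho})$, you use commutativity of (\ref{Diagram}) and the compatibility of $\Psi_{\loc}$ with the splitting $\eta\oplus W$ to kill the $W$-component of $\Psi([c])$ via hypothesis (2), then conclude via hypothesis (1). Your added remarks on why $\Psi_{\loc}$ respects the splitting and why $\Psi$ lands in $H^1_{\mathcal{N}}$ are correct and make explicit points the paper leaves implicit.
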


\begin{proof}
	 This follows from a diagram chasing argument. Let $[c]$ be a class in the tangent space $H^1_\Sigma(G_\Q,\ad^0\overline{\rho}),$ which by definition consists of classes that are mapped into $H^1(\GQp,\eta)$ under $\res$ in (\ref{Diagram}).  Then $$\Psi_\loc\circ\res([c])\in \Psi_\loc\left( H^1(\GQp,\eta)\right)\subseteq \prod_{w|p}H^1(G_w,\eta).$$ Since the diagram commutes, we have $$(\res_1,\res_2)\circ\Psi([c]) = \Psi_\loc\circ \res([c])\in \prod_{w|p}H^1(G_w,\eta).$$ Writing $\Psi([c])=[c_1]+[c_2]$ where $[c_1]\in H^1_{\mathcal{N}}(G_F,\eta)$ and $[c_2]\in H^1_{\mathcal{N}}(G_F,W)$, we have $$(\res_1,\res_2)\circ\Psi([c]) = \res_1([c_1])+\res_2([c_2])\in \prod_{w|p}H^1(G_w,\eta).$$ Thus $\res_2([c_2])=0.$ It follows from assumption (2) that  $[c_2]=0$. Hence, $\Psi([c])=[c_1]$ lies in $H^1_{\mathcal{N}}(G_F,\eta).$ By assumption (1), we conclude $[c]=0$, which completes the proof.
\end{proof}

We now study when the two assumptions in Lemma \ref{Main} hold. Assumption (1) in Lemma \ref{Main} holds automatically by the following lemma. 

\begin{lem}{\label{CohomologyBig}}
	The restriction map
	$$\Psi :H^1(G_\Q,\ad^0\overline{\rho})\to H^1(G_F,\eta)\oplus H^1(G_F,W)$$ maps a class $[c]\in H^1(G_\Q,\ad^0\overline{\rho})$ into $H^1(G_F,\eta)$ only if $[c]=0.$
\end{lem}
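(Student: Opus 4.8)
\emph{Proof proposal.} The plan is to reduce the assertion to a module-theoretic statement about the $\kfield[G]$-module $V:=\ad^0\overline{\rho}$, where $G=\Gal(L/\Q)={\rm P}\overline{\rho}(G_\Q)$. Three structural inputs drive everything. First, $\ad^0\overline{\rho}$ factors through $G$ (conjugation by a scalar matrix is trivial), so $G_L$ acts trivially on $V$. Second, $V$ is irreducible as a $\kfield[G]$-module: by Lemma \ref{S4A5} we have $G\cong S_4$ or $A_5$, the relevant three-dimensional representations are absolutely irreducible, and they are defined over $\kfield$ by Assumption \ref{Assumption}. Third, $p\nmid|G|$ because $\overline{\rho}$ is exceptional, hence also $p\nmid|D|$ for the decomposition subgroup $D=\Gal(L/F)\subseteq G$; consequently $H^i(G,V)=H^i(D,-)=0$ for all $i\ge 1$, and the inflation--restriction sequences for $G_L\triangleleft G_\Q$ and $G_L\triangleleft G_F$ degenerate to isomorphisms
$$H^1(G_\Q,V)\xrightarrow{\ \sim\ }\Hom(G_L,V)^{G},\qquad H^1(G_F,V)\xrightarrow{\ \sim\ }\Hom(G_L,V)^{D}$$
(continuous homomorphisms, $G$ acting by conjugation on $G_L$ and by the given action on $V$), compatibly with restriction along $G_L\subseteq G_F\subseteq G_\Q$ and with the $G_F$-decomposition $V=\eta\oplus W$.

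Next I would translate the hypothesis. Let $[c]\in H^1(G_\Q,V)$ and let $\phi\colon G_L\to V$ be the associated homomorphism. The standard cocycle computation shows $\phi$ is $G$-equivariant, $\phi(g\sigma g^{-1})=\bar g\cdot\phi(\sigma)$, where $\bar g$ denotes the image of $g$ in $G$; in particular $\im\phi$ is a $G$-stable subgroup of $V$. Under the identifications above, the image of $\Psi([c])$ in $\Hom(G_L,V)$ is this same map $\phi$ (now recorded only $D$-equivariantly), and the condition $\Psi([c])\in H^1(G_F,\eta)$ --- that the $H^1(G_F,W)$-component vanishes --- says precisely that $\phi$ composed with the projection $V\twoheadrightarrow W$ is zero, i.e. that $\im\phi$ is contained in the $\kfield$-line $\eta=\ker(V\twoheadrightarrow W)\subseteq V$.

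To conclude, since $\im\phi$ is $G$-stable we get $\im\phi=\bar g\cdot\im\phi\subseteq\bar g\cdot\eta$ for every $g$, hence $\im\phi\subseteq\bigcap_{\bar g\in G}\bar g\cdot\eta$. If the line $\eta$ were $G$-stable it would be a proper nonzero $\kfield[G]$-submodule of $V$, contradicting irreducibility; so some $\bar g\cdot\eta$ differs from $\eta$, and two distinct $\kfield$-lines in $V$ meet only in $0$. Thus $\im\phi=0$, so $\phi=0$, and by the injectivity of $H^1(G_\Q,V)\to H^1(G_L,V)$ we obtain $[c]=0$.

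The step I expect to need the most care is the compatibility bookkeeping in the first paragraph: checking that the two inflation--restriction isomorphisms are genuinely compatible with one another and with the decomposition $V=\eta\oplus W$, so that the map $\phi$ extracted from $[c]$ and from $\Psi([c])$ is literally the same, and that ``$\Psi([c])\in H^1(G_F,\eta)$'' therefore means exactly ``$\im\phi\subseteq\eta$''. One small point worth flagging explicitly is that $\im\phi$ is only an $\F_p$-subspace of $V$ a priori; this is harmless, since the containment $\im\phi\subseteq\bigcap_{\bar g}\bar g\cdot\eta=0$ already forces it to vanish, with no linearity over $\kfield$ needed. Note that no ramification hypothesis on $\overline{\rho}$ or on $[c]$ enters here --- the statement is purely a consequence of $\ad^0\overline{\rho}$ being $\kfield[G]$-irreducible with $p\nmid|G|$, together with the fact that $\eta$ is cut out by a proper quotient of the $G$-module $V$.
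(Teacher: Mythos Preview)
Your proof is correct and follows essentially the same line as the paper's: restrict to the subgroup on which $\ad^0\overline{\rho}$ is trivial (you use $G_L$, the paper uses the fixed field $M$ of $\overline{\rho}$ itself), identify the restricted cocycle with a $G$-equivariant homomorphism $\phi$, observe that $\im\phi$ is $G$-stable and contained in $\eta$, and conclude by showing $\eta$ is not $G$-stable.

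The one point of genuine difference is how that last claim is obtained. You deduce it from the irreducibility of $\ad^0\overline{\rho}$ as a $\kfield[G]$-module, relying on the classification $G\cong S_4$ or $A_5$ and the fact that a faithful $3$-dimensional representation of either group is irreducible. The paper instead proves directly (Sublemma~\ref{AdjointIrreducible}) that $\eta=\{\diag(a,-a)\}$ is not preserved by conjugation by all of $\overline{\rho}(G_\Q)$, via an explicit matrix argument combined with the case analysis of Lemma~\ref{GDI}. Your route is cleaner and more conceptual; the paper's is more hands-on but avoids appealing to the character tables of $S_4$ and $A_5$. Both are short, and the underlying mechanism is the same.
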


\begin{proof}
	Let $M$ be the fixed field of $\overline{\rho}.$ 
	We first show that $[c]\in H^1(G_\Q,\ad^0\overline{\rho})$ vanishes if $[c]|_{G_M}=0.$ 
	Since the order of $\overline{\rho}(G_\Q)\cong \Gal(M/\Q)$ is prime to $p,$ the cohomology group $H^1(\Gal(M/\Q),\ad^0\overline{\rho})$ vanishes. It follows from the inflation-restriction sequence that $H^1(G_\Q,\ad^0\overline{\rho})$ injects into $H^1(G_M,\ad^0\overline{\rho}).$ Thus if $[c]|_{G_M}=0,$ then $[c]=0$ in $H^1(G_\Q,\ad^0\overline{\rho}).$ 
	
	Now it suffices to prove $[c]|_{G_M}=0.$ The cochain $c$ representing $[c]$ determines a lift $\widetilde{\rho}$ of $\overline{\rho}$ by setting $\widetilde{\rho}= (1+x\cdot c) \overline{\rho}.$   We have the following short exact sequence 
	$$1\to \widetilde{\rho}(G_M)\to \widetilde{\rho}(G_\Q) \to \overline{\rho}(G_\Q)\to 1.$$ Note that  the image 
	$\widetilde{\rho}(G_M)$ is abelian because $$\widetilde{\rho}(G_M)=1+x\cdot c(G_M)\cong c(G_M)\subset 
	\ad^0\overline{\rho}.$$
	Following \S\ref{SEQModule}, the $\kfield$-vector space $V$ spanned by $c(G_M)$ is then a $G_\Q$-subrepresentation of $\ad^0\overline{\rho}.$ 
	If the image of $[c]$ under $H^1(G_\Q,\ad^0\overline{\rho})\to H^1(G_F,\ad^0\overline{\rho})$ lies in $H^1(G_F,\eta)$, then $c(G_M)\subseteq c(G_F)\subseteq \eta$ and thus $V\subseteq \eta.$  If $c(G_M)$ is nonzero, then $\eta=V$ is a $G_\Q$-representation. But $\eta$ is not a $G_\Q$-subrepresentation of $\ad^0\overline{\rho}$ by the following sublemma and the proof is complete. 
\end{proof}

\begin{sublem}\label{AdjointIrreducible}
	The $G_F$-representation $\eta$ is not a $G_\Q$-representation.
\end{sublem}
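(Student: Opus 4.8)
The plan is to argue by contradiction using class field theory, in the same spirit as the proof of Lemma \ref{AbsIrreducible}. Suppose the line $\eta\subseteq\ad^0\overline{\rho}$ were stable not merely under $G_F$ (equivalently under $\GQp$, since their projective images agree) but under all of $G_\Q$. Then $G_\Q$ would act on this line through a character $\chi:G_\Q\to\kfield^{\times}$, and by construction $\chi|_{\GQp}$ is exactly the unramified quadratic character $\eta$ of $\GQp$ occurring in the decomposition $\ad^0\overline{\rho}|_{\GQp}\cong\eta\oplus W$ (the subspace we are considering is, by definition, the first summand, on which the group acts through its projective image $D$ via the sign character of $D\cong D_{2d}$).

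First I would check that $\chi$ is unramified at every finite prime. Away from $p$ this is immediate: $\overline{\rho}$ is unramified outside $\{p,\infty\}$ by Assumption \ref{Assumption}, hence so is $\ad^0\overline{\rho}$, hence so is any subquotient, in particular $\chi$. At $p$ it follows from $\chi|_{\GQp}=\eta$, since $\eta$ is the \emph{unramified} quadratic character of $\GQp$: it is inflated from $\Gal(K/\Q_p)$, and $K/\Q_p$ is unramified, so $\eta$ and therefore $\chi$ is trivial on the inertia subgroup at $p$. Thus $\chi$ cuts out a finite abelian extension of $\Q$ unramified at all finite primes. Since the narrow class number of $\Q$ is one, $\chi$ is trivial; but $\chi|_{\GQp}=\eta$ is nontrivial, a contradiction. (Equivalently: $\chi$ restricted to $G_F$ is the order-two sign character of $\Gal(L/F)\cong D_{2d}$, so $\chi$ is a quadratic character unramified outside $\{p,\infty\}$ and hence cuts out $\Q(\sqrt{p^*})$, which is ramified at $p$ — contradicting that $\chi|_{\GQp}$ is unramified.)

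I do not anticipate a serious obstacle here; the only thing to keep straight is the two incarnations of $\eta$ — the $\GQp$-character, which is unramified at $p$, and the $G_F$-character, which is nontrivial of order two — and the observation that a genuine $G_\Q$-subrepresentation isomorphic to the line $\eta$ would have to restrict simultaneously to both, which class field theory over $\Q$ forbids. A more computational alternative would be to prove directly that $\ad^0\overline{\rho}$ is an \emph{irreducible} $G_\Q$-module: it factors through $G\cong S_4$ or $A_5$ by Lemma \ref{S4A5}, it is faithful on $G$ (since the kernel of the $G$-action on $\ad^0$ is trivial for these subgroups of $\mathrm{PGL}_2$), and the relevant $3$-dimensional character of $S_4$ or $A_5$ stays irreducible over $\kfield$ because $p\nmid|G|$ and, by Assumption \ref{Assumption}, those characters are defined over $\kfield$; but this needs a short case analysis, and the class field theory argument seems preferable.
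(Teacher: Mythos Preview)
Your argument is correct and takes a genuinely different route from the paper.

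The paper proceeds by direct matrix computation: it identifies the line $\eta$ with the trace-zero diagonal matrices, observes that any $\overline{\rho}(g)$ normalizing this line must be diagonal or anti-diagonal, deduces that every $\overline{\rho}(g)^2$ is diagonal, and then passes to $\PGL_2$ to conclude that every $h^2$ in $G$ would have to centralize $I$. A case-by-case check against the list in Lemma~\ref{GDI} then shows this fails for $G\cong S_4$ or $A_5$.

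Your approach instead extracts the global character $\chi$ on $G_\Q$ carried by the hypothetical $G_\Q$-stable line, notes that $\chi|_{\GQp}$ is the \emph{unramified} quadratic character $\eta$, and concludes from the triviality of the narrow class group of $\Q$ that $\chi$ is trivial, contradicting $\eta\neq 1$. This is exactly the mechanism behind Lemma~\ref{AbsIrreducible}, and it sidesteps the group-theoretic case analysis entirely. It is arguably cleaner: it uses only the ambient arithmetic hypotheses (unramified outside $\{p,\infty\}$, $K/\Q_p$ unramified) and never appeals to the specific structure of $S_4$ or $A_5$.

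One small remark on your parenthetical ``equivalently'': the inference ``$\chi|_{G_F}$ has order two, so $\chi$ is quadratic'' is not immediate from the restriction alone. It does follow once you note that $\chi$ factors through the abelianization of $G$ (since $\ad^0\overline{\rho}$ factors through the projective image), which is $\Z/2\Z$ for $S_4$ and trivial for $A_5$; but your main argument does not need this, so the point is harmless.
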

\begin{proof}
	Since $\overline{\rho}|_{G_K}=\diag\{\overline{\alpha},\overline{\alpha}'\},$ we have $\eta=\{\diag\{a,-a\}:a\in \kfield\}$. It follows from a straightforward calculation that in order to preserve $\eta,$ $\overline{\rho}(g)$ has to be either diagonal or anti-diagonal
	for all $g\in G_\Q$. Hence, $\overline{\rho}(g)^2$ is diagonal and it commutes with all matrices in $\overline{\rho}(G_K).$ Passing to $\PGL_2(\kfield),$ we have that for all $h\in G$, $h^2$ commutes with every element in $I\subset D.$ However, by Lemma 
	\ref{GDI}, one checks directly that this does not happen.
\end{proof}

We now study when assumption (2) in Lemma \ref{Main} holds.
Let $C=\Cl(L)/p\Cl(L)$ be the mod-$p$ class group of $L$. If $A/L$ is the maximal unramified abelian $p$-exponent extension of $L$, then class field theory gives an isomorphism $\Gal(A/L) \cong C$. By the uniqueness of $A$, $A/\Q$ is also Galois and we have the short exact sequence $$1\to C\to \Gal(A/\Q)\to G\to 1.$$ Following \S\ref{SEQModule}, the $\F_p$-vector space $C$ is endowed with a $G$-action.

Recall from Definition \ref{CohomologyCondition} of the Selmer conditions that the kernel of $\res_2$ is  $H^1_{\mathcal{L}}(G_F,W).$ Then assumption (2) amounts to saying that $H^1_{\mathcal{L}}(G_F,W)$ vanishes. We first embed the cohomology group $H^1_\mathcal{L}(G_F,W)$ into a homomorphism group. 

\begin{lem}\label{Hom}
	The Selmer group $H^1_{\mathcal{L}}(G_F,W)$ embeds into $\Hom_{D}(C,W).$
\end{lem}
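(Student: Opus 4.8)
The plan is to build the embedding by restricting cohomology classes from $G_F$ to the absolute Galois group $G_L$ of the fixed field $L$ of ${\rm P}\overline{\rho}$, to reinterpret the result through the identification $\Gal(A/L)\cong C$, and then to read off injectivity from the vanishing condition at $p$ that is built into $H^1_{\mathcal{L}}$. The key preliminary observation is that the $G_\Q$-action on $\ad^0\overline{\rho}$ factors through $G=\Gal(L/\Q)$, so the same holds for the action on its $G_F$-submodule $W$; in particular $G_L$ acts trivially on $W$ and $H^1(G_L,W)=\Hom(G_L,W)$. First I would send a class $[c]\in H^1_{\mathcal{L}}(G_F,W)$ to the homomorphism $\phi\colon G_L\to W$ obtained by restricting any cocycle representative; since $G_L$ acts trivially this is a genuine homomorphism, independent of the chosen representative. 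The inflation--restriction sequence for the normal subgroup $G_L$ of $G_F$ places $\phi$ in the $D$-invariants of $\Hom(G_L,W)$, which is to say that $\phi$ is $D$-equivariant for the conjugation action of $D=\Gal(L/F)$ on $G_L$ and the given action on $W$.

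Next I would check that $\phi$ is unramified at every finite place of $L$. At a place $w\nmid p$ of $L$ lying over a place $w'$ of $F$, the unramifiedness of $[c]$ at $w'$ makes $c|_{I_{w'}}$ a coboundary $\sigma\mapsto(\sigma-1)v$, so restricting to $I_w\subseteq I_{w'}$ and invoking the triviality of the $G_L$-action gives $\phi|_{I_w}=0$. At a place $w\mid p$ of $L$ lying over a place $w'$ of $F$, the defining condition of $H^1_{\mathcal{L}}$ requires $[c]$ to vanish at $w'$, so $c|_{G_{w'}}$ is a coboundary, and the same computation gives $\phi|_{G_w}=0$; in particular $\phi$ is unramified at $w$. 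Since $W$ is killed by $p$ and $p$ is odd (so the archimedean places are automatically unramified in any $p$-extension), $\phi$ then factors through the Galois group of the maximal unramified abelian $p$-exponent extension $A/L$, i.e.\ through $C$. Because the $D$-action on $C$ coming from $1\to C\to\Gal(A/\Q)\to G\to1$ is again the conjugation action, the induced map $\overline{\phi}\colon C\to W$ is $D$-equivariant, so $\overline{\phi}\in\Hom_D(C,W)$. This defines the desired map $[c]\mapsto\overline{\phi}$, which is visibly linear.

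Finally I would prove injectivity. If $[c]$ lies in the kernel then its restriction to $G_L$ is zero, so by inflation--restriction $[c]$ is inflated from some $[\gamma]\in H^1(D,W)$ with $D=\Gal(L/F)$. Taking the prime $\mathfrak p$ of $F$ below the distinguished prime $\mathfrak P$ of $L$, the decomposition group of $\mathfrak P$ inside $\Gal(L/F)$ is the full group $D$, hence the localization of $[c]$ at $\mathfrak p$ is the inflation of $[\gamma]$ along the surjection $G_{\mathfrak p}\twoheadrightarrow D$. The membership $[c]\in H^1_{\mathcal{L}}(G_F,W)$ forces this localization to vanish, and inflation $H^1(D,W)\hookrightarrow H^1(G_{\mathfrak p},W)$ is injective; therefore $[\gamma]=0$ and $[c]=0$.

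The hard part --- or at least the one step that is not pure bookkeeping --- is this last one: the kernel of restriction to $G_L$ is exactly the inflated image of $H^1(D,W)$, which need not vanish for dihedral $D$, and one must recognise that the local condition at $p$, together with the fact that the chosen prime of $F$ has full decomposition group $D$ in $L/F$, is precisely what annihilates it. The unramifiedness of $\phi$ and the matching of the two $D$-actions on $C$ are routine once one exploits that $G_L$ acts trivially on $W$.
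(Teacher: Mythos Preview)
Your argument is correct and lands in the same place as the paper's --- restrict to $G_L$, use triviality of the $G_L$-action to read the class as a $D$-equivariant homomorphism, and then factor through $C$ --- but your injectivity step is genuinely different. The paper dispatches injectivity in one line: since $\overline{\rho}$ is exceptional, $p\nmid |D|$, so $H^1(D,W)=0$ and inflation--restriction makes $H^1(G_F,W)\hookrightarrow H^1(G_L,W)^D$ injective on the nose, before any Selmer conditions are imposed. You instead allow $H^1(D,W)$ to be nonzero a priori and kill the inflated classes by invoking the $\mathcal{L}$-condition at the distinguished prime $\mathfrak p\mid p$ of $F$, whose decomposition group in $L/F$ is all of $D$; inflation along $G_{\mathfrak p}\twoheadrightarrow D$ is injective, so the local vanishing forces the global class to die. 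Your route is longer but buys a bit more: it does not use $p\nmid |D|$, so it would survive in settings where the residual image has order divisible by $p$, whereas the paper's one-liner is tied to the exceptionality hypothesis already in force. In the present context either works, and the paper's is the cleaner choice.
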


\begin{proof}
	 Since $p$ does not divide the order of $\Gal(L/F)\le G,$ it follows from the inflation-restriction sequence that $H^1(G_F,W)$ embeds into $H^1(G_L,W)^{D}. $ This embedding restricts to $$H^1_{\mathcal{L}}(G_F,W)\to H^1_\mathcal{L}(G_L,W)^{D}.$$ The group $H^1_\mathcal{L}(G_L,W)^{D}$ is by definition a subgroup of $H^1_\mathcal{M}(G_L,W)^{D}.$ Since the action of $G_L$ on $\ad^0\overline{\rho}$ is trivial and $W$ is a $\Z/p\Z$-module, we have $$H^1_{\mathcal{M}}(G_L,W)^{D}=\Hom(C,W)^{D}=\Hom_{D}(C,W).$$ Hence, $H^1_{\mathcal{L}}(G_F,W)$ can be viewed as a subgroup of $\Hom_D(C,W).$ 
\end{proof}

To study the vanishing condition of $\Hom_D(C,W),$ we give a less restrictive definition of nicely exceptional semisimple residual representations. Fix an identification between $G$ and $S_4$ or $A_5.$ For a subgroup $H$ of $S_4$ or $A_5,$ by $L^H$ we mean the subfield of $L$ that is fixed by the subgroup whose image is $H$ under the identification $G\cong S_4$ or $G\cong A_5.$ If $H_1$ and $H_2$ are two subgroups of $G$ that are conjugate to each other, the fields $L^{H_1}$ and $L^{H_2}$ are isomorphic by Galois theory. In general, two isomorphic subgroups of $G$ do not need to be conjugate. But when $H$ is unique up to conjugation within its isomorphism class, we write $L^{[H]}$ for the isomorphism class of number fields $L^H$ where $[H]$ stands for the isomorphism class of the subgroup $H.$ For example, when $H$ is a subgroup of $S_4$ that is isomorphic to $S_3$, then $H$ is unique up to conjugation and we write $L^{S_3}$ for the isomorphism class of $L^H.$ 

\begin{definition}\label{NicelyExceptional}
	A semisimple Galois representation $\overline{\rho}: G_\Q\to \GL_2(\overline{\F}_p)$ is said to be nicely exceptional if it is exceptional and if $p$ is prime to the class numbers of the following fields:
	\begin{enumerate}
		\item $L^{S_3}$ and $L^{\Z/4\Z}$ when $G\cong S_4$;
		\item $L^{A_4}$ and $L^{\Z/5\Z}$ when $G\cong A_5$.
	\end{enumerate}
When $G$ is not $S_4$ or $A_5,$ the condition on class numbers is vacuously satisfied.
\end{definition}

\begin{lem}\label{HomVanishes}
	Suppose that $\overline{\rho}$ is nicely exceptional. Then $\Hom_{D}(C,W)=0.$ 
\end{lem}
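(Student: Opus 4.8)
The plan is to prove $\Hom_D(C,W) = 0$ by exploiting the $G$-module structure of the mod-$p$ class group $C = \Cl(L)/p\Cl(L)$ together with the explicit description of the decomposition group $D$ and the inertia subgroup $I$ from Lemma \ref{GDI}. First I would recall that $W \cong \Ind^{\GQp}_{G_K}(\overline{\alpha}'^{-1}\overline{\alpha})$ as a $D$-representation, so that $W$ is the two-dimensional induced representation from the unique index-two cyclic subgroup $I' $ of $D = D_{2d}$ (with $I \subseteq I'$ the inertia), on which the generator of $I'$ acts by a primitive $d$-th root of unity $\zeta_d$. In particular $W$ has no $D$-fixed vectors, and more to the point its irreducible $D$-subquotients are exactly the faithful two-dimensional representations of $D_{2d}$; a nonzero $D$-homomorphism $C \to W$ forces $C$ to contain such a two-dimensional summand as a $D$-module, equivalently to contain a vector on which $I' $ acts by $\zeta_d$ (a nontrivial character since $d \ge 2$).

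The key step is then to translate "$C$ has a $D$-constituent isomorphic to a piece of $W$'' into a statement about class numbers of subfields of $L$, using the standard fact that for a subgroup $H \le G$, the $H$-coinvariants (or invariants, since $p \nmid |H|$ makes these agree up to the averaging idempotent) of $C$ compute the $p$-part of $\Cl(L^H)$ — more precisely $C^H \cong \Cl(L^H)/p\Cl(L^H)$ because $p \nmid [L:L^H]$, so the transfer/restriction maps are split by $\frac{1}{[L:L^H]}\mathrm{Nm}$. So I would choose $H$ judiciously: for $G \cong S_4$, take $H$ to be an $S_3$ and a $\Z/4\Z$ sitting appropriately relative to $D$ (which is a non-normal $D_4$ with $I$ generated by a transposition, or $D_8$ with $I \cong \Z/4\Z$, by Lemma \ref{GDI}(a)); for $G \cong A_5$, take $H = A_4$ and $H = \Z/5\Z$ relative to $D \in \{D_4, D_6, D_{10}\}$ by Lemma \ref{GDI}(b). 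The hypothesis that $\overline{\rho}$ is nicely exceptional says precisely $p \nmid \#\Cl(L^H)$ for these $H$, hence $C^H = 0$, hence $C$ has no constituent with $H$-fixed vectors. The remaining combinatorial point is to check, group by group and case by case over the possibilities for $D$ in Lemma \ref{GDI}, that every irreducible $D$-constituent of $W$ — when inflated/restricted appropriately and viewed inside a $G$-module — must have nonzero invariants under at least one of the designated subgroups $H$. This is a finite check using character tables of $S_4$ and $A_5$ (whose irreducible characters are assumed defined over $\kfield$ in Assumption \ref{Assumption}): one restricts each irreducible of $G$ to $D$, sees which ones contain the faithful $2$-dimensional $D_{2d}$-piece matching $W$, and verifies that all such irreducibles of $G$ have a fixed vector under the chosen $H$.

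The main obstacle I expect is exactly this last bookkeeping: $C$ is a priori an arbitrary $\F_p[G]$-module, so I cannot directly say which irreducibles occur in it; I can only detect occurrence via vanishing of invariants $C^H$ for the $H$'s controlled by the class-number hypothesis. Thus the argument must be arranged so that the single piece of information "$W$ (or any $D$-constituent of it) always has an $H$-fixed line for one of the allowed $H$'' suffices to kill $\Hom_D(C,W)$ — concretely, if $0 \ne \varphi \in \Hom_D(C,W)$ then $\mathrm{im}\,\varphi$ is a nonzero $D$-submodule of $W$, it contains an $H$-fixed vector for the appropriate $H$, and pulling back by $\varphi$ (which is $D$-equivariant, hence $H$-equivariant) together with surjectivity onto its image gives a nonzero element of $C^H$ after applying the idempotent $e_H = \frac{1}{|H|}\sum_{h\in H} h$, contradicting $C^H = 0$. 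So the real content is the representation-theoretic lemma about $D$-subrepresentations of $W$ having $H$-fixed vectors; once that is in place the homological conclusion is formal. I would organize the proof as: (i) identify $W|_D$ explicitly; (ii) reduce to showing every nonzero $D$-submodule of $W$ meets $W^H$ nontrivially for some admissible $H$; (iii) do the case analysis by the type of $D$ from Lemma \ref{GDI}, invoking nice exceptionality to get $C^H = 0$; (iv) conclude $\Hom_D(C,W) = 0$.
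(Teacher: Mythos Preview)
Your high-level plan in the second paragraph is correct and is essentially the paper's approach: by Frobenius reciprocity $\Hom_D(C,W)\cong\Hom_{\kfield[G]}(C\otimes\kfield,\Ind^G_DW)$, and one kills the right-hand side by decomposing $\Ind^G_DW$ into $G$-irreducibles (Sublemma~\ref{InducedW2}) and showing, for each constituent $V$, that $V^H\ne0$ for some subgroup $H$ with $C^H=0$ (Sublemma~\ref{SubfieldClassNumber}).

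However, your third-paragraph execution has a genuine gap. The subgroups $H$ supplied by nice exceptionality ($S_3,\Z/4\Z$ for $G\cong S_4$; $A_4,\Z/5\Z$ for $G\cong A_5$) are in general \emph{not} contained in $D$: for instance $S_3\not\le D_4,D_8$ and $A_4\not\le D_4,D_6,D_{10}$. Consequently a $D$-equivariant map $\varphi:C\to W$ is not $H$-equivariant, $W$ carries no $H$-action at all, and the phrase ``$\im\varphi$ contains an $H$-fixed vector'' has no meaning. Even in the one case where $H\le D$ (namely $H=\Z/4\Z=I\le D=D_8$) one finds $W^{\Z/4\Z}=0$, since $W\cong\chi^\Box$ restricted to $I$ is the sum of the two faithful characters of $\Z/4\Z$. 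So pulling back fixed vectors directly along $\varphi$ cannot work; you must first pass to the $G$-module $\Ind^G_DW$ before invoking $H$-invariants on $C$. There is also a second gap in your bookkeeping: when $G\cong S_4$ and $D\cong D_4$, the summand $\sgn$ of $\Ind^G_DW$ has no fixed vectors under either $S_3$ or $\Z/4\Z$, so the designated $H$'s from Definition~\ref{NicelyExceptional} do not dispose of it; the paper handles this piece with the extra unconditional input that $L^{A_4}=\Q(\sqrt{p^*})$ always has class number prime to $p$.
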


\begin{proof}
	See \S{\ref{Proofof3.4.5}}. The proof only involves a tedious manipulation of representations of $G,D$ and $I.$
\end{proof}

Now we conclude the vanishing conditions for the tangent space. 
\begin{lem}\label{TangentSpaceVanishes}
	Suppose that $\overline{\rho}$ is nicely exceptional. Then the tangent space $H^1_{\Sigma}(G_\Q,\ad^0\overline{\rho}_f)$ is trivial.
\end{lem}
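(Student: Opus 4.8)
The plan is to assemble the pieces proved in this subsection into the diagram-chasing machinery of Lemma \ref{Main}. Recall that Lemma \ref{Main} reduces the vanishing of the tangent space $H^1_\Sigma(G_\Q,\ad^0\overline{\rho})$ to two hypotheses about the restriction maps in diagram (\ref{Diagram}): first, that $\Psi$ sends no nontrivial class into $H^1_{\mathcal{N}}(G_F,\eta)$; second, that $\res_2\colon H^1_{\mathcal{N}}(G_F,W)\to\prod_{w\mid p}H^1(G_w,W)$ is injective. So the entire proof of Lemma \ref{TangentSpaceVanishes} is to verify these two hypotheses under the hypothesis that $\overline{\rho}$ is nicely exceptional, and then invoke Lemma \ref{Main}.

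For the first hypothesis I would simply quote Lemma \ref{CohomologyBig}: it states precisely that $\Psi\colon H^1(G_\Q,\ad^0\overline{\rho})\to H^1(G_F,\eta)\oplus H^1(G_F,W)$ carries a class $[c]$ into the $H^1(G_F,\eta)$-summand only when $[c]=0$. Restricting this statement along $H^1_{\mathcal{N}}(G_\Q,\ad^0\overline{\rho})\hookrightarrow H^1(G_\Q,\ad^0\overline{\rho})$ gives hypothesis (1) of Lemma \ref{Main} verbatim; note that Lemma \ref{CohomologyBig} needs no assumption beyond $\overline{\rho}$ being exceptional (it uses that $p\nmid|\overline{\rho}(G_\Q)|$, which is Assumption \ref{Assumption}, together with Sublemma \ref{AdjointIrreducible}). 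For the second hypothesis, I would observe that the kernel of $\res_2$ is by Definition \ref{CohomologyCondition} exactly $H^1_{\mathcal{L}}(G_F,W)$, so injectivity of $\res_2$ is equivalent to $H^1_{\mathcal{L}}(G_F,W)=0$. Now Lemma \ref{Hom} embeds $H^1_{\mathcal{L}}(G_F,W)$ into $\Hom_D(C,W)$, and Lemma \ref{HomVanishes} says that $\Hom_D(C,W)=0$ precisely when $\overline{\rho}$ is nicely exceptional. Chaining these gives $H^1_{\mathcal{L}}(G_F,W)=0$, hence hypothesis (2).

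With both hypotheses in hand, Lemma \ref{Main} yields $H^1_\Sigma(G_\Q,\ad^0\overline{\rho})=0$, which by Proposition \ref{TangentSpace} is the tangent space of $\mathbf{D}$. (The notation $\overline{\rho}_f$ in the statement is the same object $\overline{\rho}$ of Assumption \ref{Assumption} specialized to the modular case, so the identification of cohomology groups is unchanged.) I do not anticipate a genuine obstacle at this stage: the substantive work has already been isolated into Lemmas \ref{CohomologyBig}, \ref{Hom}, and \ref{HomVanishes}, and Lemma \ref{TangentSpaceVanishes} is the bookkeeping step that feeds them into Lemma \ref{Main}. The one point requiring a modicum of care is making sure the Selmer conditions match up — that the ``$\mathcal{N}$'' condition defining $H^1_{\mathcal{N}}(G_F,W)$ together with the vanishing-at-$p$ requirement cut out by $\ker\res_2$ really do recombine into the ``$\mathcal{L}$'' condition of Definition \ref{CohomologyCondition} — but this is immediate from comparing the definitions, since $\mathcal{L} = \mathcal{N} \cap \{\text{vanishing at } w\mid p\}$.
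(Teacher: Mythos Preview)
Your proposal is correct and takes exactly the same approach as the paper: the paper's proof is the single sentence ``It follows from Lemma \ref{Main}, Lemma \ref{CohomologyBig}, Lemma \ref{Hom} and Lemma \ref{HomVanishes},'' and you have simply unpacked how those four lemmas slot together to verify hypotheses (1) and (2) of Lemma \ref{Main}.
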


\begin{proof}
	It follows from Lemma \ref{Main}, Lemma \ref{CohomologyBig}, Lemma \ref{Hom} and Lemma \ref{HomVanishes}.
\end{proof}

\subsection{Proof of Theorem \ref{ThmFinite}}
If $\overline{\rho}$ is nicely exceptional, then $H^1_{\Sigma}(G_\Q,\ad^0\overline{\rho})$, which is the tangent space of $\mathbf{D}$, is trivial by Lemma \ref{TangentSpaceVanishes}. Thus, the deformation ring $R$ of $\mathbf{D}$ is a quotient of $W(\kfield).$ Since the order of $\overline{\rho}(G_\Q)$ is prime to $p,$ we can lift the finite group $\overline{\rho}(G_\Q)$ to $\GL_2(W(\kfield))$ and obtain a $W(\kfield)$-point $[\rho_1]$ of $\Spec(R)$. (We twist $\rho_1$ if necessary as in \S\ref{DeformationFunctor} so that $\det\rho_1=\chi$.) Hence $R$ is $W(\kfield).$ On the other hand, $[\rho\otimes \chi']$ is an $\mathcal{O}_T$-point of $\Spec(R)$ as explained in \S\ref{DeformationFunctor}. Since the only nontrivial homomorphisms from $W(\kfield)$ to $\mathcal{O}_T$ are embeddings,
$[\rho\otimes \chi']$ is also a $W(\kfield)$-point of $\Spec(R)$. By the uniqueness of the $W(\kfield)$-point on $\Spec(R)=\Spec(W(\kfield))$, we have $[\rho\otimes\chi']=[\rho_1]$ and so $\rho$ has finite image up to a twist.
\hfill\qed

\subsection{Proof of Lemma \ref{HomVanishes}} {\label{Proofof3.4.5}}
Continue with the notations in \S 3.2. The idea is to write  $$\Hom_D(C,W)\cong \Hom_{\kfield[D]}(C\otimes\kfield,W)\cong\Hom_{\kfield[G]}(C\otimes \kfield,\Ind^G_DW),$$ where the second isomorphism follows from Frobenius reciprocity. 
Note that in $\Hom_{D}(C,W),$ both $C$ and $W$ are viewed as $\Z/p\Z$-modules (or equivalently, $\F_p$-vector spaces) with $D$-action. By base changing to $\kfield,$ we work with representations over $\kfield$ to study when $\Hom_{\kfield[G]}(C\otimes \kfield,\Ind^G_DW)$ vanishes. Since the order of $G$ is prime to $p,$ it follows from  Maschke's theorem that finite dimensional representations of $G,D$ and $I$ over $\kfield$ are semisimple. 

We first describe $W$ as a $D$-representation over $\kfield$ and then we compute $\Ind^G_DW.$ In order to proceed, we need to name some irreducible characters of $D$ and $G$. We treat here the case $G\cong S_4.$ (When $G$ is isomorphic to $A_5,$ the proof is similar and actually simpler because all subgroups of $A_5$ are determined up to conjugation uniquely by their size.) Fix an identification $G\cong S_4.$ When the decomposition subgroup $D$ is identified with the non-normal $D_4$, we let 
$\rho_1$ and $\rho_2$ be the two irreducible characters of $D$ that are nontrivial on $I;$ when the decomposition subgroup $D$ is isomorphic to $D_8,$ we let 
$\chi^\Box$ be the unique two dimensional irreducible character of $D.$ Let $\sgn,\chi^{\perp},\sgn\otimes \chi^{\perp}$ and $\chi^{(5)}$ be the four nontrivial irreducible characters of $G\cong S_4.$ Here, the character $\chi^{\perp}$ is the orthogonal complement to the trivial character in the defining representation of $S_4$; the character $\sgn$ is the sign representation of $S_4;$ and the character $\chi^{(5)}$ is the unique two dimensional irreducible character of $S_4.$

\begin{sublem}{\label{InducedW2}}
	\leavevmode
	 \begin{enumerate}
		\item When $D\cong D_4$, we have $W\cong\rho_1\oplus\rho_2$ as a $D$-representation over $\kfield$ and $$\Ind^G_{D}W\cong \sgn\oplus\chi^{\perp}\oplus(\chi^{\perp}\otimes\sgn)^2\oplus \chi^{(5)}.$$
		\item When $D\cong D_8,$ we have $W\cong\chi^{\Box}$ as a $D$-representation over $\kfield$ and $$\Ind^G_{D}W\cong \chi^{\perp}\oplus (\chi^{\perp}\otimes\sgn).$$
	\end{enumerate} 
\end{sublem}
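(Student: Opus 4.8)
The plan is to identify $W$ explicitly as a $2$-dimensional representation of $D$ and then compute its induction to $G\cong S_4$ by character theory. Recall from Lemma~\ref{AdjointRepDecompWhenInduced} that $W$ is the $G_F$-summand $\Ind^{\GQp}_{G_K}(\overline{\alpha}'^{-1}\overline{\alpha})$ of $\ad^0\overline{\rho}$, on which $G_F$ acts through its projective image $D$; and since $\overline{\alpha}'^{-1}\overline{\alpha}$ factors through $I={\rm P}\overline{\rho}(G_K)\cong\Z/d\Z$ and has exact order $d$ (precisely the content of the proof of Lemma~\ref{InducedDihedral}), $W$ is, as a $D\cong D_{2d}$-module, the representation induced from a faithful character of the index-two cyclic subgroup $I$. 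By Lemma~\ref{GDI}(a), when $G\cong S_4$ either $d=2$ and $D$ is the non-normal Klein four group with $I$ generated by a transposition, or $d=4$ and $D\cong D_8$ with $I$ cyclic of order $4$.

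In the first case $D$ is abelian and $p\nmid|D|$, so $W$ splits into characters of $D$ over $\kfield$; since $W$ restricts to a faithful character of $I\cong\Z/2\Z$, it must be the sum of the two characters of $D$ that are nontrivial on $I$, i.e. $W\cong\rho_1\oplus\rho_2$. In the second case the standard fact that $D_8$ has a unique $2$-dimensional irreducible character, realized exactly as the induction of a faithful character of its cyclic subgroup of order $4$, gives $W\cong\chi^{\Box}$.

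To compute $\Ind^{G}_{D}W$ I would use Frobenius reciprocity: the multiplicity of each irreducible $\tau\in\{\mathbf{1},\sgn,\chi^{\perp},\sgn\otimes\chi^{\perp},\chi^{(5)}\}$ of $S_4$ in $\Ind^{G}_{D}W$ equals $\langle W,\res^{G}_{D}\tau\rangle_{D}$, so the task reduces to restricting these five characters to $D$. In the Klein four case one reads off $\res^{G}_D\tau$ from the values of $\tau$ at the identity, at a transposition, and at a double transposition; pairing each restriction with $\rho_1$ and with $\rho_2$ yields $\Ind^{G}_D\rho_1\cong\chi^{\perp}\oplus(\sgn\otimes\chi^{\perp})$ and $\Ind^{G}_D\rho_2\cong\sgn\oplus(\sgn\otimes\chi^{\perp})\oplus\chi^{(5)}$, whose sum is the claimed $\sgn\oplus\chi^{\perp}\oplus(\chi^{\perp}\otimes\sgn)^2\oplus\chi^{(5)}$. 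In the $D_8$ case $\chi^{\Box}$ vanishes off the identity and the central element $\sigma$ (a double transposition), so $\langle\chi^{\Box},\res^{G}_D\tau\rangle=\tfrac14(\tau(1)-\tau(\sigma))$, which is $1$ exactly for the two $3$-dimensional irreducibles and $0$ otherwise, giving $\Ind^{G}_D\chi^{\Box}\cong\chi^{\perp}\oplus(\sgn\otimes\chi^{\perp})$.

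I expect the only real obstacle to be bookkeeping: making sure $D$ is identified with the correct conjugacy class of subgroup of $S_4$ — in particular using Lemma~\ref{GDI} that $I$ is generated by a transposition and not a double transposition in the $D_4$ case, since this changes which characters $\rho_1,\rho_2$ are — and correctly restricting $\chi^{(5)}$, which is inflated from $S_4/V_4^{{\rm norm}}\cong S_3$ and therefore restricts to $D$ according to how $D$ meets the normal Klein four. For $G\cong A_5$ the same computation is genuinely easier, since every subgroup occurring as $D$ or $I$ is determined up to conjugacy by its order, so no such care about conjugacy classes is required.
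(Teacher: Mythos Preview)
Your argument is correct and follows essentially the same route as the paper. The one difference worth noting is in the $D_4$ case: rather than first splitting $W=\rho_1\oplus\rho_2$ and inducing each piece separately via Frobenius reciprocity, the paper uses transitivity of induction to write $\Ind^G_D W=\Ind^G_D\Ind^D_I(\sgn|_I)=\Ind^G_I(\sgn|_I)$ and decomposes this single $12$-dimensional representation in one step, which sidesteps the labeling of $\rho_1,\rho_2$ and the bookkeeping you flag.
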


\begin{proof}
	Recall that 
	$W= \Ind^{\GQp}_{G_K}(\overline{\alpha}'^{-1}\overline{\alpha})$ as a $\GQp$-representation over $\kfield.$ 
	Regarding $W$ as a $D$-representation over $\kfield$, we have $W=\Ind^{D}_{I}(\overline{\alpha}'^{-1}\overline{\alpha}),$  where $\overline{\alpha}'^{-1}\overline{\alpha}$ is viewed as a character of $I$. 
	
	When $D$ is isomorphic to $D_4$, the order of $\overline{\alpha}'^{-1}\overline{\alpha}$ is $d=2.$ 
	We then have $$\overline{\alpha}'^{-1}\overline{\alpha}=\overline{\alpha}^{-1}\overline{\alpha}'\cong \sgn|_I$$ because $I$ is identified with the subgroup generated by a single transposition in $S_4.$ Hence, $W\cong\sgn|_{I}\oplus \sgn|_{I}$ as an $I$-representation over $\kfield$. 
	By semisimplicity of representations of $G$ and Frobenius reciprocity, 
	we have $$\Ind^G_{D}W=\Ind^G_D(\Ind^D_I(\sgn|_I))=\Ind^G_I(\sgn|_I)\cong\sgn\oplus\chi^{\perp}\oplus(\chi^{\perp}\otimes\sgn)^2\oplus \chi^{(5)}$$ as claimed.
	
	When $D$ is isomorphic to $D_8$, the order of $\overline{\alpha}'^{-1}\overline{\alpha}$ is $d=4$. 
	Then $\overline{\alpha}'^{-1}\overline{\alpha}$ is not equal to $\overline{\alpha}^{-1}\overline{\alpha}'$ and so $\overline{\alpha}'^{-1}\overline{\alpha}$ does not extend to a character of $D$. By Lemma \ref{InducedReducibleAndSemisimple}, $W$ is irreducible as a $D$-representation over $\kfield$. Hence it is the unique two dimensional irreducible representation $\chi^{\Box}$ of $D$. 
	By the same reasoning as before, we have $$\Ind^G_{D}W\cong\chi^{\perp}\oplus (\chi^{\perp}\otimes \sgn)$$ as claimed.
\end{proof}

We have the general results for an $S_4$ extension $L/\Q$ that is unramified outside $\{p,\infty\}.$

\begin{sublem}\label{SubfieldClassNumber} Let $L/\Q$ be an $S_4$ extension unramified outside $\{p,\infty\}$ and let $C$ be $\Cl(L)/p\Cl(L).$ Suppose that $p$ is prime to the order of $S_4.$ Then \begin{enumerate}
		\item [(a)]  $\Hom_{\kfield[G]}(C\otimes \kfield,\sgn)=0;$
		\item [(b)] $\Hom_{\kfield[G]}(C\otimes \kfield,\chi^\perp)=0$ if $p$ does not divide the class number of $L^{S_3}$;
		\item [(c)] $\Hom_{\kfield[G]}(C\otimes\kfield,\chi^\perp\otimes \sgn)=0$ if $p$ does not divide the class number of $L^{\Z/4\Z}$.
		\item [(d)] $\Hom_{\kfield[G]}(C\otimes\kfield,\chi^{(5)})=0$ if $p$ does not divide the class number of $L^{D_8}.$
	\end{enumerate}
\end{sublem}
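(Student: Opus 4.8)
The strategy is to translate each $\Hom_{\kfield[G]}(C\otimes\kfield,-)$ statement into a statement about the $p$-part of a class group of an intermediate field, using the fact that $C\otimes\kfield$ is a semisimple $\kfield[G]$-module (since $p\nmid|S_4|$) and that for any subgroup $H\le G$ with fixed field $L^H$, the $H$-invariants $(C\otimes\kfield)^H$ compute $\Cl(L^H)/p\Cl(L^H)\otimes\kfield$ by the standard transfer/norm argument (the restriction map $\Cl(L^H)\to\Cl(L)^H$ has kernel and cokernel killed by $[L:L^H]$, which is prime to $p$). So for an irreducible character $\chi$ of $G$, the multiplicity of $\chi$ in $C\otimes\kfield$ — which is what $\Hom_{\kfield[G]}(C\otimes\kfield,\chi)$ detects when $\chi$ has multiplicity one as an abstract character, and in general is governed by $\langle C\otimes\kfield,\chi\rangle$ — is controlled by Frobenius reciprocity: $\langle C\otimes\kfield,\chi\rangle = \langle \res^G_H(C\otimes\kfield),\res^G_H\chi\rangle$ for a well-chosen $H$, and choosing $H$ so that $\res^G_H\chi$ contains the trivial character picks out the invariants $(C\otimes\kfield)^H$, i.e. the mod-$p$ class group of $L^H$.

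Concretely: for (a), $\sgn$ is the determinant character of the $S_4$-permutation action, whose kernel is $A_4$; the fixed field $L^{A_4}=\Q(\sqrt{p^*})$ is a real (or imaginary) quadratic field, and one argues its class number contributes nothing — more precisely, $\sgn$ occurs in $C\otimes\kfield$ only if $\sgn$ occurs in the mod-$p$ class group structure of $\Q(\sqrt{p^*})$ with the relevant sign, and since the narrow/ordinary class group of $\Q$ is trivial and $\Q(\sqrt{p^*})/\Q$ is ramified only at $p$, genus theory forces the relevant eigenspace to vanish (alternatively, $\sgn$ factors through $\Gal(\Q(\sqrt{p^*})/\Q)$ and the needed $\Hom$ would produce an unramified-outside-$p$ abelian $p$-extension of $\Q$). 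For (b), restrict to $H=S_3$ (a point stabilizer): $\res^{S_4}_{S_3}\chi^\perp = \mathbf{1}\oplus(\text{standard }S_3)$, so $\langle C\otimes\kfield,\chi^\perp\rangle \le \langle (C\otimes\kfield)^{S_3},\mathbf{1}\rangle = \dim_\kfield(\Cl(L^{S_3})/p\Cl(L^{S_3}))\otimes\kfield$, which is zero by hypothesis. For (c), twist: $\chi^\perp\otimes\sgn$ restricted to the subgroup fixing $L^{\Z/4\Z}$ contains the trivial character (one checks $\Z/4\Z\le S_4$ is generated by a $4$-cycle, on which $\sgn=-1$ and $\chi^\perp$ has the matching eigenvalue), so the same reciprocity argument bounds the multiplicity by $\dim(\Cl(L^{\Z/4\Z})/p)$. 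For (d), take $H=D_8$, the Sylow $2$-subgroup: $\res^{S_4}_{D_8}\chi^{(5)}$ contains the trivial character of $D_8$ (since $\chi^{(5)}$ is $2$-dimensional and $D_8$ has order $8$, and one checks the character values directly), bounding the multiplicity by $\dim(\Cl(L^{D_8})/p)$.

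The main obstacle I expect is item (a): unlike (b)--(d), which reduce cleanly to a class-number hypothesis that is simply assumed, (a) must be proved unconditionally, and this requires carefully identifying which eigenspace of the mod-$p$ class group of the quadratic subfield $L^{A_4}$ is relevant and showing it vanishes — this is where one genuinely uses that $L^{A_4}=\Q(\sqrt{p^*})$ is ramified only at $p$ together with a genus-theory or reflection argument, rather than citing a hypothesis. A secondary technical point is being careful about the difference between $\Hom_{\kfield[G]}(C\otimes\kfield,\chi)$ and the character-theoretic multiplicity $\langle C\otimes\kfield,\chi\rangle$: since all of $\sgn,\chi^\perp,\chi^\perp\otimes\sgn,\chi^{(5)}$ are absolutely irreducible and defined over $\kfield$ by Assumption \ref{Assumption}, these agree, so $\Hom_{\kfield[G]}(C\otimes\kfield,\chi)=0$ iff the multiplicity is zero; I would state this equivalence at the outset and then work entirely with multiplicities and Frobenius reciprocity.
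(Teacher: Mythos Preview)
Your overall strategy for (b)--(d) is correct and essentially matches the paper: both you and the paper reduce the vanishing of $\Hom_{\kfield[G]}(C\otimes\kfield,\chi)$ to the $p$-divisibility of the class number of $L^H$ for a well-chosen subgroup $H$. The paper phrases this via an explicit Galois-theoretic construction (Schur--Zassenhaus splitting of $1\to V\to\Gal(A^{C/V}/L^H)\to H\to 1$ and then taking $H$-invariants to produce an unramified $p$-extension of $L^H$), while you use the norm/transfer identification $(C\otimes\kfield)^H\cong\Cl(L^H)/p\otimes\kfield$ together with Frobenius reciprocity; these are two sides of the same coin.

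The genuine gap is in (a). You correctly reduce to showing $p\nmid h(\Q(\sqrt{p^*}))$, but both of your proposed justifications fail. Genus theory concerns the $2$-part of the class group and says nothing about the $p$-part for odd $p$. Your alternative --- that a nonzero $\Hom_G(C,\sgn)$ would produce an unramified-outside-$p$ abelian $p$-extension of $\Q$ --- is incorrect: a $G$-equivariant surjection $C\twoheadrightarrow\sgn$ corresponds to an unramified $\Z/p\Z$-extension of $\Q(\sqrt{p^*})$ on which $\Gal(\Q(\sqrt{p^*})/\Q)$ acts by $-1$, hence a \emph{dihedral} (not abelian) extension of $\Q$. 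The trivial-character case would give an abelian extension of $\Q$, but that is not what is at stake here. The paper's argument is much simpler than genus theory or reflection: it invokes the classical elementary bound $h(\Q(\sqrt{p^*}))<p$ (citing Washington for $p\equiv 1\pmod 4$ and Mazur for $p\equiv 3\pmod 4$), which immediately gives $p\nmid h(\Q(\sqrt{p^*}))$. You should replace your tentative justifications for (a) with this bound.
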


\begin{proof}
	Note that for every representation $U$ of $G$ over $\kfield,$ we have $$\Hom_{\kfield[G]}(C\otimes \kfield,U)\cong \Hom_{\F_p[G]}(C,U)$$ where the latter $U$ is viewed as an $\F_p$-vector space.
	
	For part (a), if $\Hom_{\kfield[G]}(C\otimes \kfield,\sgn)\neq 0,$ then $\Hom_{\F_p[G]}(C,\sgn)\neq 0$. Thus $A_4$ acts trivially on a nontrivial subspace $V$ of $C.$ Viewing $C/V$ as a normal subgroup of $C,$ we have the short exact sequence $$1\to V\to \Gal(A^{C/V}/L^{A_4})\to A_4\to 1.$$ Since $p$ is prime to the order of $S_4,$ the sequence splits and by virtue of the fact that $A_4$ acts trivially on $V$ we have $\Gal(A^{C/V}/L^{A_4})\cong A_4\times V.$ By Galois theory, $(A^{C/V})^{A_4}$ is an unramified abelian $p$-exponent extension of the quadratic field $L^{A_4}=\Q(\sqrt{p^*})$. Hence $p$ divides the class number of $\Q(\sqrt{p^*})$. 
	However, when $p\equiv 1\pmod 4,$ the class number of $\Q(\sqrt{p})$ is smaller than $p$ (see for example \cite[\S5.6]{washington1997introduction}); when $p\equiv 3\pmod 4$  the class number of $\Q(\sqrt{-p})$ is smaller than $p$ (see for example \cite[\S1.13]{mazur1989deforming}).  
	
	For part (b), (c) and (d), we replace $A_4$ in the above argument by $S_3, \Z/4\Z$ and $D_8$ respectively. Then the homomorphism groups vanish if $p$ does not divide the class numbers of these fields.  
\end{proof}

\begin{proof}[Proof of Lemma \ref{HomVanishes}]
	Let $H_1\le H_2\le S_4$ be a chain of subgroups such that $H_1\cong \Z/4\Z$ and $H_2\cong D_8.$ Then $L^{H_2}$ is a subfield of $L^{H_1}.$ If the class number of $L^{H_1}$ is not divisible by $p,$ neither is that of $L^{H_2}.$ Thus $p$ does not divide the class numbers of all the number fields in the isomorphism class $L^{D_8}.$ 
	By Sublemma \ref{SubfieldClassNumber}, we then have \begin{align*}
		\Hom_{\kfield[G]}\left(C\otimes\kfield,\sgn\oplus \chi^{\perp}\oplus (\chi^\perp\otimes \sgn)^2\oplus \chi^{(5)}\right)=0.
	\end{align*} When $D\cong D_4,$ by Frobenius reciprocity and Sublemma \ref{InducedW2}, it follows that $$\Hom_D(C,W)\cong \Hom_{\kfield[D]}(C\otimes \kfield,W)\cong \Hom_{\kfield[G]}\left(C\otimes\kfield,\Ind_{D}^GW\right)=0.$$ Similar considerations apply to the case $D\cong D_8. $
\end{proof}

\subsection*{Acknowledgments} I would like to thank my PhD advisor Frank Calegari for suggesting the problem, for sharing his insights throughout the project and for providing extensive comments on an earlier version of the paper. I also thank the anonymous reviewer for their careful reading, as well as their helpful comments and corrections. This paper was funded in part by NSF Grant DMS-2001097.

\bibliographystyle{abbrv}
\bibliography{ref}

\end{document}